\renewcommand\eqref[1]{(\ref{#1})} 
\newtheorem{theorem}{Theorem}[section]
\newtheorem{lemma}[theorem]{Lemma}
\newtheorem{proposition}[theorem]{Proposition}
\newtheorem{definition}[theorem]{Definition}
\theoremstyle{definition}
\newtheorem{remark}[theorem]{Remark}
\newcommand{\wt}[1]{\widetilde{#1}}
\newcommand{\Cinf}{\ensuremath{\mathcal{C}^\infty}}
\newcommand{\Cinfc}{\ensuremath{\mathcal{C}^\infty_{\text{c}}}}
\newcommand{\D}{\ensuremath{{\mathcal D}}}
\renewcommand{\S}{\mathscr{S}}
\newcommand{\E}{\ensuremath{{\mathcal E}}}
\newcommand{\mF}{\mathcal{F}}
\newcommand{\mb}[1]{\ensuremath{\mathbb{#1}}}
\newcommand{\N}{\mb{N}}
\newcommand{\R}{\mb{R}}
\newcommand{\C}{\mb{C}}
\newcommand{\lara}[1]{\langle #1 \rangle}
\newfont{\bl}{msbm10 scaled \magstep2}
\newcommand{\beq}{\begin{equation}}
\newcommand{\eeq}{\end{equation}}
\newcommand{\eps}{\varepsilon}
\renewcommand{\Im}{\ensuremath{\mathrm{Im}}}
\newcommand{\esp}{\mathrm{e}}
\newcommand\Rn{{\mathbb R}^n}
\newcommand{\supp}{\mathrm{supp}}
\renewcommand\N{{\mathbb N}_0}
\title[On hyperbolic equations and systems]{
On hyperbolic equations and systems with non-regular time dependent coefficients}
\author[Claudia Garetto]{Claudia Garetto}
\address{
  Claudia Garetto:
  \endgraf
  Department of Mathematical Sciences
  \endgraf
  Loughborough University
  \endgraf
  Loughborough, Leicestershire, LE11 3TU
  \endgraf
  United Kingdom
  \endgraf
  {\it E-mail address} {\rm c.garetto@lboro.ac.uk}
  }
\thanks{The author was supported by the
EPSRC First grant EP/L026422/1.
}
\date{}
\subjclass[2010]{Primary 35L25; 35L40; Secondary 46F05;}
\keywords{Hyperbolic equations, Gevrey spaces, ultradistributions, weak solutions}
\begin{document}

\maketitle

\begin{abstract}
In this paper we study higher order weakly hyperbolic equations with
time dependent non-regular coefficients. The non-regularity here means less than 
H\"older, namely bounded coefficients.
As for second order equations in \cite{GR:14} we prove that such equations admit a
`very weak solution' adapted to the type of solutions that exist for
regular coefficients. The main idea in the construction of a very weak solution is the regularisation
of the coefficients via convolution with a mollifier and a qualitative analysis of the corresponding
family of classical solutions depending on the regularising parameter.
Classical solutions are recovered as limit of
very weak solutions. Finally, by using a reduction to block Sylvester form we conclude that any first order hyperbolic system with non-regular coefficients
is solvable in the very weak sense.
 
\end{abstract}

\section{Introduction}
We want to study equations of the type
\beq
\label{intro_eq}
D^m_t u-\sum_{j=1}^m\sum_{|\nu|= j} a_{\nu,j}(t)D_t^{m-j}D^\nu_x u-\sum_{j=1}^m\sum_{|\nu|<j}b_{\nu,j}(t)D^{m-j}_tD^\nu_x u=f(t,x),\,t\in[0,T], x\in\R^n,
\eeq
under initial conditions
\beq
\label{intro_ic}
D_t^{k}u(0,x)=g_k,\quad k=0,\cdots,m-1.
\eeq
 We assume that the roots of the characteristic polynomial 
 \[
 \tau^m-\sum_{j=1}^m\sum_{|\nu|= j} a_{\nu,j}(t)\xi^\nu\tau^{m-j}=\Pi_{j=1,\dots,m}(\tau-\lambda_j(t,\xi))
 \]
 are real and bounded in $t$ but not necessarily regular, for instance they might be discontinuous in $t$ as generated by discontinuous coefficients  $a_{\nu,m}$.  We assume that the coefficients of the lower order terms are compactly supported distributions with support contained in $[0,T]$, the right-hand side $f$ belongs to $\E'([0,T])\otimes \E'(\Rn)$ and that the initial data belong to $\E'(\R^n)$.
 
Typical examples are the wave equation
 \[
\partial_t^2u(t,x)-\sum_{i=1}^n a_i(t)\partial_{x_i}^2u(t,x)=f(t,x),
\]
where the coefficients $a_i$ are Heaviside functions or more in general equations of the type
\beq
\label{op_2}
D_t^2u(t,x)-\sum_{i=1}^n b_i(t)D_tD_{x_i}u(t,x)-\sum_{i=1}^n a_i(t)D_{x_i}^2u(t,x)=f(t,x),
\eeq
where the coefficients are bounded real valued functions with $a_i$ positive for all $i=1,\dots,n$ (see \cite{GR:14} for more details). Note that it is not restrictive to assume that the coefficients are compactly supported as in \cite{GR:14}.
An immediate higher order examples is given by the composition of a finite number of hyperbolic second order operators as in \eqref{op_2}, i.e.,
\[
\biggl(\Pi_{k=1}^m\biggl( D_t^2-\sum_{i=1}^n b_{k,i}(t)D_tD_{x_i}-\sum_{i=1}^n a_{k,i}(t)D_{x_i}^2\biggr)\biggr)u(t,x)
\]
plus lower order terms. Its characteristic polynomial  
\[
\Pi_{k=1}^m \biggl(\tau^2-\sum_{i=1}^n b_{k,i}(t)\tau\xi_i-\sum_{i=1}^n a_{k,i}(t)\xi_i^2\biggr).
\]
has $2m$ real roots.

Hyperbolic Cauchy problems with non regular coefficients naturally appear in applied sciences as geophysics and seismology,
to model  delta-like sources and discontinuous or more irregular media. We refer the reader to \cite{Marsan-Bean} and 
\cite{Hormann-de-Hoop:AAM-2001} for a survey on this kind of applications.

The Cauchy problem \eqref{intro_eq}-\eqref{intro_ic} has been extensively studied when the coefficients are at least H\"older. See the first work by Colombini and Kinoshita in one space dimension in \cite{ColKi:02}, the extension to any space dimension in \cite{GR:11} and the recent paper \cite{GR:12} for the treatment of lower order terms by Levi conditions.
In all these paper well-posedness is proven in Gevrey classes and by duality in ultradistributional spaces. Note that even if the coefficients are very regular ($C^\infty$) well-posedness has to be expected to hold only in Gevrey classes (see \cite{Colombini-deGiordi-Spagnolo-Pisa-1979} and \cite{CJS:Pisa-1983}) and the corresponding Cauchy problems might be distributionally ill-posed due to the presence of multiplicities (see the examples constructed in \cite{Colombini-Spagnolo:Acta-ex-weakly-hyp} and \cite{Colombini-Jannelli-Spagnolo:Annals-low-reg}). No well-posedness results are known when the assumption of H\"older regularity is dropped.

Our aim in this paper is to solve the Cauchy problem in \eqref{intro_eq}-\eqref{intro_ic}. Due to the low regularity of the equation's coefficients and characteristic roots, which does not allow classical Gevrey or ultradistributional solutions, we will look for \emph{very weak solutions}, namely for nets of solutions of the regularised problem obtained from \eqref{intro_eq}-\eqref{intro_ic} by convolution with Friedrichs mollifiers.  Inspired by the treatment of second order equations in \cite{GR:14}, our starting point is the regularisation of the roots and initial data. This is a technique quite common in hyperbolic equations which, under sufficient regularity assumptions, leads to a classical Gevrey well-posedness result by relating the regularising parameter with the phase variable at the Fourier transform level, as in \cite{ColKi:02} and \cite{GR:11}. In this paper we focus on the regularising nets and the corresponding nets of solutions, proving existence of a very weak solution and consistency with the classical Gevrey or ultradistributional solution whenever it exists.

\subsection{Basic notions and very weak solutions}\quad\vspace{0.2cm}

Before stating the definition of very weak solution we recall few preliminary notions concerning Gevrey functions and moderate nets. For more details we refer the reader to \cite{GR:14} and \cite{Rod:93}. 

Let $s\ge 1$. We say that $f\in C^\infty(\R^n)$ belongs to the Gevrey class $\gamma^s(\R^n)$ if for every compact set $K\subset\R^n$ there exists a constant $C>0$ such that for all $\alpha\in\N^n$ we have the estimate
\[
\sup_{x\in K}|\partial^\alpha f(x)|\le C^{|\alpha|+1}(\alpha!)^s.
\]
In this paper we make use of the following notion of moderate net.   
\begin{definition}
\label{def_mod_intro}
\leavevmode
\begin{itemize}
\item[(i)] A net of functions $(h_\eps)_\eps\in C^\infty(\R^n)^{(0,1])}$ is $C^\infty$-moderate if for all $K\Subset\R^n$ and for all $\alpha\in{\N}^n$ there exist $N\in\N$ and $c>0$ such that
\[
\sup_{x\in K}|\partial^\alpha h_\eps(x)|\le c\eps^{-N},
\]
for all $\eps\in(0,1]$.\\ 
\item[(ii)] A net of functions $(h_\eps)_\eps\in \gamma^s(\R^n)^{(0,1]}$ is $\gamma^s$-moderate if for all $K\Subset\R^n$ there exists a constant $c_K>0$ and there exists $N\in\N$ such that 
\[
|\partial^\alpha h_\eps(x)|\le c_K^{|\alpha|+1}(\alpha !)^s \eps^{-N-|\alpha|},
\]
for all $\alpha\in\N^n$, $x\in K$ and $\eps\in(0,1]$.\\
\item[(iii)] A net of functions $(h_\eps)_\eps\in C^\infty([0,T];\gamma^s(\R^n))^{(0,1]}$ is $C^\infty([0,T];\gamma^s(\R^n))$-mo\-de\-rate 
if for all $K\Subset\R^n$ there exist $N\in\N$, $c>0$ and, for all $k\in\N$ there exist $N_k>0$ and $c_k>0$ such that
\[
|\partial_t^k\partial^\alpha_x h_\eps(t,x)|\le c_k\eps^{-N_k} c^{|\alpha|+1}(\alpha !)^s \eps^{-N-|\alpha|},
\]
for all $\alpha\in\N^n$, for all $t\in[0,T]$, $x\in K$ and $\eps\in(0,1]$.
\end{itemize}
\end{definition}
More in general, given two spaces $X$ and $Y$, with $Y\subseteq X$ (usually $X=\D'(\R^n), \E'(\R^n),\\ L^\infty(\R^n),\dots$ and $Y=C^\infty(\R^n), \gamma^s(\R^n), \dots$), we use the expression regularisation of $h\in X$ for a net of regular functions $(h_\eps)_\eps\in Y^{(0,1]}$ approximating $h$ in $X$ as $\eps\to 0$. 

We are now ready to state the definition of \emph{very week solution} of the Cauchy problem 
\beq
\label{intro_CP}
\begin{split}
D^m_t u-\sum_{j=1}^m\sum_{|\nu|= j} a_{\nu,j}(t)D_t^{m-j}D^\nu_x u-\sum_{j=1}^m\sum_{|\nu|<j}b_{\nu,j}(t)D^{m-j}_tD^\nu_x u &=f(t,x),\, t\in[0,T], x\in\R^n,\\
D_t^{k}u(0,x)&=g_k,\quad k=0,\cdots,m-1.
\end{split}
\eeq

\begin{definition}
\label{def_vws}
Let $s\ge1$. The net $(u_\eps)_\eps\in C^\infty([0,T];\gamma^s(\R^n))$ is a very weak solution of order $s$ of the 
Cauchy problem \eqref{intro_CP} if there exist 
\begin{itemize}
\item[(i)] $C^\infty$-moderate regularisations $a_{\nu,j,\eps}$ and  $b_{\nu,j\eps}$ of all the coefficients  $a_{\nu,j}$ and $b_{\nu,j}$, respectively,
\item[(ii)] a $C^\infty([0,T];\gamma^s(\R^n))$-moderate regularisation $f_\eps$ of the right-hand side $f$, and
\item[(iii)] $\gamma^s$-moderate regularisations $g_{k,\eps}$ of the initial data $g_k$ for $k=0,\cdots m-1$, 
\end{itemize}
such that $(u_\eps)_\eps$ solves the regularised problem
\[
\begin{split}
D^m_t u-\sum_{j=1}^m\sum_{|\nu|= j} a_{\nu,j,\eps}(t)D_t^{m-j}D^\nu_x u-\sum_{j=1}^m\sum_{|\nu|<j}b_{\nu,j,\eps}(t)D^{m-j}_tD^\nu_x u &=f_\eps(t,x),\\
D_t^{k}u(0,x)&=g_{k,\eps}.
\end{split}
\]
for $t\in[0,T], x\in\R^n,  k=0,\cdots,m-1$ and $\eps\in(0,1]$, and is $C^\infty([0,T];\gamma^s(\R^n))$-moderate.
\end{definition}

\subsection{Paper's aim and main result}\quad\vspace{0.2cm}

The aim of this paper is to prove that the hyperbolic Cauchy problem \eqref{intro_CP} admits very weak solutions when the coefficients of the principal part are only bounded and the lower order terms, the right-hand side and the initial data are compactly supported distributions and that these weak solutions converge to the classical one in case of H\"older coefficients. 
Note that differently from the classical results for H\"older coefficients in \cite{ColKi:02, GR:11} we do not require here any technical hypothesis on the roots, namely the uniformity property (2.5) in \cite{GR:11}, and differently from \cite{GR:12} no Levi conditions are needed on the lower order terms. Some first results of existence of very weak solutions have been recently obtained in \cite{GR:14} for some family of homogeneous second order equations. In this paper we extend \cite{GR:14} to higher order equations and we deal with lower order terms as well.  

This is our main result:

\begin{theorem}
\label{main_theo}
 The hyperbolic Cauchy problem
\[
\begin{split}
D^m_t u-\sum_{j=1}^m\sum_{|\nu|= j} a_{\nu,j}(t)D_t^{m-j}D^\nu_x u-\sum_{j=1}^m\sum_{|\nu|<j}b_{\nu,j}(t)D^{m-j}_tD^\nu_x u &=f(t,x),\, t\in[0,T], x\in\R^n,\\
D_t^{k}u(0,x)&=g_k,\quad k=0,\cdots,m-1,
\end{split}
\]
where the equation coefficients are compactly supported in $t$, $a_{\nu,j}\in L^\infty([0,T])$ for $|\nu|=j$, $j=1,\dots,m$, $b_{\nu,j}\in\E'([0,T])$ for $|\nu|<j$, $j=1,\dots,m$, $f\in \E'([0,T])\otimes\E'(\R^n)$ and $g_k\in\E'(\R^n)$ for all $k=0,\dots,m-1$, has a very weak solution of order $s$ for any $s>1$.
\end{theorem}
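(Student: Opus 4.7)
The overall strategy, inspired by \cite{GR:14}, is to regularise every object in \eqref{intro_CP} by convolution with a Friedrichs mollifier and then to analyse the $\eps$-behaviour of the resulting net of classical solutions. Let $\vphi\in\Cinfc(\R)$ with $\int\vphi=1$ and $\vphi_\eps=\eps^{-1}\vphi(\cdot/\eps)$, and let $\psi_\eps$ be a Gevrey $\gamma^s$ mollifier on $\R^n$. Set $a_{\nu,j,\eps}=a_{\nu,j}\ast\vphi_\eps$, $b_{\nu,j,\eps}=b_{\nu,j}\ast\vphi_\eps$, $f_\eps=f\ast(\vphi_\eps\otimes\psi_\eps)$ and $g_{k,\eps}=g_k\ast\psi_\eps$. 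Because $a_{\nu,j}\in L^\infty([0,T])$, the convolution bound $|\d_t^k a_{\nu,j,\eps}|\le c_k\eps^{-k}$ is immediate, so $(a_{\nu,j,\eps})_\eps$ is $\Cinf$-moderate. The compactly supported distributions $b_{\nu,j}$ have some finite order $N_{\nu,j}$, and mollification yields bounds of the form $c_k\eps^{-N_{\nu,j}-k}$, still $\Cinf$-moderate. The Gevrey mollifier produces $\gamma^s$- and $\Cinf([0,T];\gamma^s(\R^n))$-moderate regularisations of the initial data and the right-hand side. For each fixed $\eps>0$ the regularised problem has smooth coefficients in $t$, so a classical solution $u_\eps\in\Cinf([0,T];\gamma^s(\R^n))$ exists, and the task reduces to showing that $(u_\eps)_\eps$ is $\Cinf([0,T];\gamma^s(\R^n))$-moderate.

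For each $\eps$ I rewrite the regularised $m$-th order scalar equation as a first order $m\times m$ system $D_tU_\eps=A_\eps(t,D_x)U_\eps+B_\eps(t,D_x)U_\eps+F_\eps$ in block Sylvester form, with $U_\eps=(\lara{D_x}^{m-1}u_\eps,\lara{D_x}^{m-2}D_tu_\eps,\ldots,D_t^{m-1}u_\eps)^T$. The principal part $A_\eps(t,\xi)$ is a companion matrix whose characteristic polynomial has the regularised roots $\la_{j,\eps}(t,\xi)$; these need not be real, but mollification controls the imaginary parts in an averaged sense. Following the Kinoshita--Spagnolo quasi-symmetriser construction, I associate to $A_\eps$ a family of approximate symmetrisers $Q_{\eps,s}(t,\xi)$ tailored to the target Gevrey exponent $s>1$. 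After Fourier transform in $x$, the resulting differential inequality combined with Gronwall integration produces, for any $s>1$ and some integers $M,M'$ depending on $s$, $m$ and the orders of the distributions $b_{\nu,j}$,
\[
|\FT{U_\eps}(t,\xi)|\le C\,\eps^{-M'}\,\esp^{C'\eps^{-M}\lara{\xi}^{1/s}}\Bigl(|\FT{U_\eps}(0,\xi)|+\int_0^t|\FT{F_\eps}(\tau,\xi)|\,d\tau\Bigr),
\]
uniformly in $t\in[0,T]$ and $\xi\in\Rn$.

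Inverting the Fourier transform and inserting the $\gamma^s$-moderate bounds already established for $\FT{U_\eps}(0,\cdot)$ and $\FT{F_\eps}$ gives the required $\gamma^s$-moderate bound on $u_\eps(t,\cdot)$; higher time derivatives are then obtained inductively by differentiating the equation and using the bounds already proved for the coefficients and for the lower time derivatives. The main obstacle is precisely the energy estimate of the middle step: the mollified coefficients only obey $|\d_t^k a_{\nu,j,\eps}|\le c_k\eps^{-k}$, so any use of higher derivatives in the quasi-symmetriser automatically pays an additional power of $\eps^{-1}$, while the pinching constants of the Kinoshita--Spagnolo construction degenerate in the auxiliary parameter as $\delta^{2(m-1)}$. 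Reconciling these two competing losses by a $\delta$ chosen as a joint power of $\eps$ and $\lara{\xi}$, in such a way that the final exponent is of Gevrey type $s$ in $\xi$ and merely polynomial in $\eps^{-1}$, is the quantitative heart of the proof; the very weak solution framework absorbs any such polynomial loss in $\eps$, which is exactly why the Cauchy problem admits a very weak solution for every $s>1$ even though classical Gevrey well-posedness would require far more regularity of the coefficients.
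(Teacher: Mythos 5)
Your overall strategy (mollify, reduce to a first order Sylvester system, run an energy estimate, invert the Fourier transform) is the right skeleton, but two of your steps are not just different from the paper --- they actually fail, and the fix in each case is precisely the missing idea.

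First, you mollify the coefficients $a_{\nu,j}$ directly, $a_{\nu,j,\eps}=a_{\nu,j}\ast\vphi_\eps$, and then concede that the roots of the regularised characteristic polynomial ``need not be real, but mollification controls the imaginary parts in an averaged sense.'' That is not enough: the set of coefficient vectors with real roots is not convex, so averaging can and does produce non-hyperbolic polynomials, and then there is no symmetriser or quasi-symmetriser at all. The paper instead mollifies the real roots $\lambda_j(t,\xi)$, adds a small separation term, $\lambda_{j,\eps}=\lambda_j(\cdot,\xi)\ast\vphi_{\omega(\eps)}+j\,\omega(\eps)\lara{\xi}$, and only then defines the $a_{\nu,j,\eps}$ as the elementary symmetric functions of the $\lambda_{j,\eps}$ (Proposition~\ref{prop_eigen_reg}, formula~\eqref{formula_coeff}). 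This guarantees the regularised problem is not merely hyperbolic but \emph{strictly} hyperbolic, so the ordinary Jannelli symmetriser $S(t,\xi)$ works with $\det S\ge\omega(\eps)^{m^2-m}$; there is no need for the Kinoshita--Spagnolo quasi-symmetriser $Q_{\eps,s}$ you invoke, and indeed that construction is built to cope with H\"older coefficients and would not obviously survive the $\eps^{-k}$ derivative loss you are introducing.

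Second, and more seriously, your displayed energy estimate
\[
|\FT{U_\eps}(t,\xi)|\le C\,\eps^{-M'}\esp^{C'\eps^{-M}\lara{\xi}^{1/s}}\Bigl(|\FT{U_\eps}(0,\xi)|+\int_0^t|\FT{F_\eps}|\,d\tau\Bigr)
\]
cannot close the argument. The Fourier characterisation of $\gamma^s$-moderate, compactly supported nets (Proposition~\ref{prop_reg_gevrey_mod}) only gives decay of the form $\esp^{-c\,\eps^{1/s}\lara{\xi}^{1/s}}$, and for small $\eps$ the growing factor $\esp^{C'\eps^{-M}\lara{\xi}^{1/s}}$ swamps it: the product is $\esp^{(C'\eps^{-M}-c\eps^{1/s})\lara{\xi}^{1/s}}$, which blows up and does not produce a $\gamma^s$-moderate bound. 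In the final paragraph you gesture at ``reconciling the competing losses'' with $\delta$ a joint power of $\eps$ and $\lara{\xi}$, but your stated target --- a loss ``merely polynomial in $\eps^{-1}$'' --- is incompatible with a linear regularisation scale $\eps$, because one necessarily pays $\exp(c\,\eps^{-N}t)$ through Gronwall. The paper's resolution is to decouple the regularisation scale from $\eps$: one chooses $\omega(\eps)$ \emph{logarithmic}, fixed by $c\,\omega(\eps)^{-N-m^2+m}=\ln(1/\eps)$, so that the Gronwall factor becomes $\exp(t\ln(1/\eps))=\eps^{-t}$, a $\xi$-independent, purely polynomial loss in $\eps$. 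That single choice is what makes the very weak solution exist for every $s>1$, and it is exactly the step missing from your argument.
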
 

Since the Cauchy problem above is solved by reduction to a hyperbolic first order system in Sylvester form we automatically have that any hyperbolic first order system of size $m\times m$ in block Sylvester form,  with $t$-dependent bounded real eigenvalues (of the principal part), lower order terms in $\E'([0,T])$, right-hand side in $( \E'([0,T])\otimes\E'(\R^n))^m$ and initial data in $\E'(\R^n)^m$ has a very weak solution of order $s$ for any $s>1$.  

D'Ancona and Spagnolo proved in \cite[Section 4]{DS} that any hyperbolic system can be reduced to block Sylvester form. Therefore, by combining this result with the observation above we can state the following theorem.
\begin{theorem}
\label{main_theo_syst}
Any linear first order hyperbolic system of size $m\times m$ with compactly supported $t$-dependent coefficients and bounded eigenvalues with respect to $t\in[0,T]$, lower order terms in $\E'([0,T])$, right-hand side in $( \E'([0,T])\otimes\E'(\R))^m$ and initial data in $\E'(\R)^m$ has a very weak solution of order $s$ for any $s>1$.
\end{theorem}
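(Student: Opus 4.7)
The plan is to reduce the general first order hyperbolic system to the block Sylvester case and then invoke Theorem \ref{main_theo}, using the observation made just before the statement that a system in block Sylvester form is equivalent to a collection of scalar higher order hyperbolic equations. Write the system in the form $D_t U = A(t, D_x) U + B(t) U + F(t,x)$, $U(0,x) = G(x)$, with $A$ of principal order one, $B$ of order zero in $D_x$, and data as in the hypothesis.

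First I would mollify the entries of $A$ and $B$, the right-hand side $F$, and the initial data $G$ by Friedrichs mollifiers, producing $C^\infty$-moderate nets $A_\eps$, $B_\eps$, a $C^\infty([0,T];\gamma^s(\R))$-moderate net $F_\eps$, and a $\gamma^s$-moderate net $G_\eps$ in the sense of Definition \ref{def_mod_intro}. At this smooth regularised level I would then apply the D'Ancona--Spagnolo reduction from \cite[Section 4]{DS} to the system for $U_\eps$. This yields an invertible symbol-level transformation $T_\eps(t,\xi)$, polynomially bounded in $\xi$ and built out of polynomial combinations of the regularised eigenvalues $\lambda_{j,\eps}(t,\xi)$ and their $t$-derivatives, such that $V_\eps := T_\eps(t,D_x) U_\eps$ satisfies a system in block Sylvester form whose principal-part eigenvalues coincide with those of $A_\eps$ and are therefore real and $L^\infty$-bounded in $t$ uniformly in $\eps$.

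Next, each Sylvester block amounts to a scalar hyperbolic equation of some order $m_j$ with bounded real $t$-dependent eigenvalues, moderate lower order coefficients, moderate right-hand side (after applying $T_\eps$ to $F_\eps$) and moderate initial data (after applying $T_\eps$ to the initial vector, using that $T_\eps$ is a Fourier multiplier of polynomial growth and hence maps $\gamma^s$-moderate nets to $\gamma^s$-moderate nets). Hence Theorem \ref{main_theo} applied blockwise delivers a $C^\infty([0,T];\gamma^s(\R))$-moderate net $(V_\eps)_\eps$ solving the block Sylvester system for any $s>1$. Setting $U_\eps := T_\eps(t,D_x)^{-1} V_\eps$ then produces a very weak solution of order $s$ of the original Cauchy problem.

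The main obstacle is verifying the moderateness of both $T_\eps$ and, above all, of its inverse $T_\eps^{-1}$. For $T_\eps$ itself moderateness is routine, since it is built polynomially from quantities inheriting $C^\infty$-moderateness from the mollified coefficients. For $T_\eps^{-1}$ the difficulty is that its determinant has a Vandermonde-type structure in the $\lambda_{j,\eps}$ which could degenerate when the eigenvalues of the system fall together. One must therefore exploit the hyperbolicity of the original system, together with the qualitative control on regularised nets already developed in the scalar proof of Theorem \ref{main_theo}, to establish a polynomial-in-$\eps$ lower bound for this determinant on compact sets in $\xi$ and thereby propagate moderateness through the inverse transformation back to $(U_\eps)_\eps$.
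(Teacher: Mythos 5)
Your overall strategy—regularise, reduce to block Sylvester form via D'Ancona--Spagnolo, and run the scalar argument—is the right one, but you have misidentified the nature of the D'Ancona--Spagnolo reduction, and this creates a nonexistent obstacle while hiding the actual mechanics. You model the reduction as conjugation by an invertible symbol-level matrix $T_\eps(t,\xi)$ with $V_\eps = T_\eps(t,D_x)U_\eps$, and then worry about inverting $T_\eps$ because of Vandermonde-type degeneracy in the $\lambda_{j,\eps}$. That is not what the reduction does. In \cite[Section 4]{DS} one applies the \emph{cofactor operator} $L(t,D_t,D_x)$ of $\tau I - A(t,\xi)$ to the system from the left. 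This is a differential operator of order $m-1$ in $(t,x)$, not a pointwise conjugation in $\xi$; it turns the $m\times m$ first-order system for $u$ into $m$ scalar equations $\delta(t,D_t,D_x)u_k = (\text{lower order coupling}) + G_k$ with $\delta = \det(\tau I - A)$. The passage to a first-order $m^2\times m^2$ system with principal part in block Sylvester form is then the standard substitution $U = \{D_t^{j-1}\lara{D_x}^{m-j}u\}_{j=1,\dots,m}$. Recovering $u$ from $U$ is trivial—it is the $j=1$ block multiplied by $\lara{D_x}^{-(m-1)}$—so there is no inverse transformation whose moderateness needs to be controlled, and the Vandermonde determinant you were worried about never enters. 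The price paid is the enlargement of the system from $m\times m$ to $m^2\times m^2$ and the order-$(m-1)$ differentiation of $F$ and of the initial data by $L$, which is harmless because $\E'$ is stable under differentiation and the moderateness classes absorb polynomial growth in $\xi$.

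A secondary inaccuracy is the plan to ``apply Theorem \ref{main_theo} blockwise.'' After the reduction the $m$ Sylvester blocks share the principal part $\delta(t,D_t,D_x)$ but remain coupled through the lower-order matrix $\mathcal{L}$, so one cannot solve each block by the scalar theorem in isolation. What one does (and what the paper does) is exploit the block structure to build a symmetriser for the full $m^2\times m^2$ system that is block diagonal—$m$ copies of the size-$m$ Jannelli symmetriser of a single Sylvester block—and then \emph{repeat} the energy estimate of Theorem \ref{main_theo} for the whole system, absorbing the lower-order coupling into the remainder exactly as $B$ is absorbed in the scalar proof. This is a re-run of the argument rather than an invocation of the theorem as a black box. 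Your worry about preserving moderateness through the reduction is legitimate in spirit, but the resolution is that the D'Ancona--Spagnolo reduction is one-sided (a left-multiplication followed by the standard order-reduction), not a conjugation, and therefore the inversion problem you flagged does not arise.
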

Note that few results are known concerning the well-posedness of hyperbolic sy\-stems with multiplicities. Whenever no particular assumptions are made on the multiplicities a certain regularity of the coefficients is required. This means to work under the assumptions that the $t$-dependent coefficients are at least H\"older. See the work of  d'Ancona, Kinoshita and Spagnolo \cite{dAKS:04, dAKS:08} for $t$-dependent hyperbolic systems of size $2\times 2$ and $3\times 3$ and the extension to any size given by Yuzawa in \cite{Yu:05}. Theorem \ref{main_theo_syst} is the first result for hyperbolic systems with multiplicities which goes beyond the traditional hypothesis of  H\"older regularity and opens an exciting and new research path.   

This paper deals with scalar equations and systems with time dependent coefficients only. The dependence in $x$ is a rather problematic issue and so far has been treated under strong regularity hypotheses (Gevrey). We mention the foundational work of Bronshtein \cite{Bronshtein:TMMO-1980} and Nishitani \cite{Nishitani:BSM-1983} for scalar equations and the paper of Kajitani and Yuzawa \cite{KY:06} for systems. The relationship between lower $x$-regularity and very weak solvability will be analysed in a future paper.

For the sake of the reader we conclude this introduction with the Fourier characterisations of Gevrey functions, ultradistributions and moderate nets which will be heavily used throughout the paper.

\subsection{Fourier characterisations}\quad\vspace{0.2cm}

Let $\gamma^s_c(\R^n)$ denote the space of compactly supported Gevrey functions of order $s$ and let $\lara{\xi}=(1+|\xi|^2)^{\frac{1}{2}}$. The proof of the following proposition can be found in  \cite[Theorem 1.6.1]{Rod:93}).
\begin{proposition}
\label{prop_Fourier}
\leavevmode
\begin{itemize}
\item[(i)] Let $u\in \gamma^s_c(\R^n)$. Then, there exist constants $c>0$ and $\delta>0$ such that
\beq
\label{fou_gey_1}
|\widehat{u}(\xi)|\le c\,\esp^{-\delta\lara{\xi}^{\frac{1}{s}}}
\eeq
for all $\xi\in\R^n$.
\item[(ii)] Let $u\in\S'(\R^n)$. If there exist constants $c>0$ and $\delta>0$ such that \eqref{fou_gey_1} holds then $u\in \gamma^s(\R^n)$.
  \end{itemize}
\end{proposition}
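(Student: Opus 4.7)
Let $u\in\gamma^s_c(\R^n)$ with $\supp u\subseteq K$. Since $u$ is compactly supported, $\widehat{u}$ is smooth (in fact entire) and integration by parts in the Fourier integral gives
\[
\xi^\alpha\widehat{u}(\xi)=\int_K \esp^{-ix\cdot\xi}\,D^\alpha u(x)\,dx,
\]
so that $|\xi^\alpha||\widehat{u}(\xi)|\le |K|\cdot C^{|\alpha|+1}(\alpha!)^s$ by the Gevrey estimate on $u$. Passing from $\xi^\alpha$ to $|\xi|^{2k}$ via the binomial identity $|\xi|^{2k}=\sum_{|\alpha|=k}\binom{k}{\alpha}\xi^{2\alpha}$ (or equivalently using $\langle\xi\rangle^{2k}\widehat{u}=\mathcal{F}((1-\Delta)^ku)$) I would arrive at
\[
\langle\xi\rangle^{2k}|\widehat{u}(\xi)|\le A\,B^k\,((2k)!)^s
\]
for constants $A,B$ depending only on $K$, $n$ and the Gevrey constant $C$. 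Then I would optimise in $k\in\N$ for each fixed $\xi$: Stirling gives $((2k)!)^s\sim(2k/\esp)^{2ks}$, so the right hand side behaves like $(A'\,k^{2s}/\langle\xi\rangle^{2})^k$; choosing $k$ close to $\langle\xi\rangle^{1/s}/(\esp A'^{1/(2s)})$ converts this into the exponential bound $\esp^{-\delta\langle\xi\rangle^{1/s}}$ with $\delta=s/(A' \esp^s)^{1/s}$.

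\textbf{Plan for part (ii).} Suppose $u\in\S'(\R^n)$ satisfies $|\widehat{u}(\xi)|\le c\,\esp^{-\delta\langle\xi\rangle^{1/s}}$. Exponential decay of $\widehat{u}$ places it in $L^1$, so Fourier inversion represents $u$ as a smooth function
\[
u(x)=(2\pi)^{-n}\int_{\Rn} \esp^{ix\cdot\xi}\,\widehat{u}(\xi)\,d\xi,
\]
and differentiation under the integral sign yields $\partial^\alpha u(x)=(2\pi)^{-n}\int \esp^{ix\cdot\xi}(i\xi)^\alpha\widehat{u}(\xi)\,d\xi$. I would then split the exponential as $\esp^{-\delta\langle\xi\rangle^{1/s}}=\esp^{-(\delta/2)\langle\xi\rangle^{1/s}}\cdot\esp^{-(\delta/2)\langle\xi\rangle^{1/s}}$, keeping the second factor to guarantee $L^1$ integrability in $\xi$, and bound the remaining product $|\xi|^{|\alpha|}\esp^{-(\delta/2)\langle\xi\rangle^{1/s}}$ pointwise by its maximum in $\xi$. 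A standard calculus computation shows the maximum is attained at $|\xi|^{1/s}\sim 2s|\alpha|/\delta$ with value $\sim(2s|\alpha|/\delta)^{s|\alpha|}\esp^{-|\alpha|}$. Using the elementary inequalities $|\alpha|^{|\alpha|}\le\esp^{|\alpha|}|\alpha|!$ and $|\alpha|!\le n^{|\alpha|}\alpha!$, this gives $|\partial^\alpha u(x)|\le C^{|\alpha|+1}(\alpha!)^s$ uniformly in $x\in\Rn$, which is the Gevrey estimate on every compact set.

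\textbf{Main obstacle.} Both implications are essentially a dictionary between the factorial growth $(\alpha!)^s$ of derivatives and the subexponential decay/growth $\esp^{\pm\delta\langle\xi\rangle^{1/s}}$ of the Fourier transform. The one genuinely delicate step in either direction is the \emph{optimisation over the order of differentiation} $k$ (respectively over $\xi$): one needs Stirling's bound $k!\ge (k/\esp)^k$ together with a sharp choice of $k=k(\xi)$ (respectively $\xi=\xi(\alpha)$) in order that the two sides of the estimate balance and produce the exponent $1/s$. Once this balancing is carried out carefully and the combinatorial passage between $\alpha!$, $|\alpha|!$ and $(2k)!$ is handled by the binomial/multinomial identities, the rest of the argument is book-keeping of constants.
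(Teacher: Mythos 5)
Your argument is correct and is essentially the classical Paley--Wiener-type proof for Gevrey classes; the paper itself gives no proof but simply cites \cite[Theorem 1.6.1]{Rod:93}, where the same optimisation between $(\alpha!)^s$-growth of derivatives and $\esp^{-\delta\lara{\xi}^{1/s}}$-decay is carried out. The only points to tidy in a full write-up are book-keeping ones (e.g.\ the value of the supremum of $|\xi|^{|\alpha|}\esp^{-(\delta/2)\lara{\xi}^{1/s}}$ is of order $(2s|\alpha|/\delta)^{s|\alpha|}\esp^{-s|\alpha|}$, and in (ii) one should note that the assumed bound already presupposes $\widehat{u}$ is a function, so Fourier inversion applies), none of which affects the validity of the scheme.
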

Gevrey-moderate nets can be characterised at the Fourier transform level as well. 
\begin{proposition} 
\label{prop_reg_gevrey_mod}
\leavevmode
\begin{itemize}
\item[(i)] If $(h_\eps)_\eps$ is $\gamma^s$-moderate and there exists $K\Subset\R^n$ such that $\supp\, u_\eps\subseteq K$ for all $\eps\in(0,1]$ then there exist $c,c'>0$ and $N\in\N$ such that
\beq
\label{star1}
|\widehat{h_\eps}(\xi)|\le c'\eps^{-N}\esp^{-c\eps^{\frac{1}{s}}\lara{\xi}^{\frac{1}{s}}},
\eeq
for all $\xi\in\R^n$ and $\eps\in(0,1]$.
 \item[(ii)] If $(h_\eps)_\eps$ is a net of tempered distributions with $(\widehat{u_\eps})_\eps$ satisfying \eqref{star1} then $(h_\eps)_\eps$ is $\gamma^s$-moderate.
\end{itemize}
\end{proposition}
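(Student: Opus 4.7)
My plan is to treat parts~(i) and~(ii) as Fourier-analytic duals of one another: part~(i) converts the derivative-level bounds defining $\gamma^s$-moderateness into Fourier-side decay, while part~(ii) runs the same computation in reverse via the Fourier inversion formula. For~(i), the uniform compact support of $(h_\eps)_\eps$ in a fixed set $K$ lets me integrate by parts freely, so $\xi^\alpha\widehat{h_\eps}(\xi)=\widehat{D^\alpha h_\eps}(\xi)$ and the hypothesis gives
\[
|\xi^\alpha\widehat{h_\eps}(\xi)|\le |K|\,c_K^{|\alpha|+1}(\alpha!)^s\,\eps^{-N-|\alpha|}.
\]
For every $k\in\N$, specialising to a multiindex $\alpha$ of length $k$ supported at a coordinate maximising $|\xi_i|$ (with $\alpha!=k!$) gives, after absorbing an $n^{k/2}$ into the constant,
\[
|\xi|^k\,|\widehat{h_\eps}(\xi)|\le C^{k+1}(k!)^s\,\eps^{-N-k}.
\]

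The exponential estimate \eqref{star1} is then a classical Gevrey optimisation in $k$. Stirling's bound $(k!)^s\le C_1^{\,k}k^{sk}$ rewrites the last inequality as $|\widehat{h_\eps}(\xi)|\le C_2\eps^{-N}\bigl(C_3\,k^s/(\eps|\xi|)\bigr)^k$; choosing $k\simeq c_0(\eps|\xi|)^{1/s}$ with $c_0$ so small that $C_3c_0^s<1/2$ forces $\bigl(C_3k^s/(\eps|\xi|)\bigr)^k\le 2^{-k}\le \esp^{-c\eps^{1/s}|\xi|^{1/s}}$ for a suitable $c>0$ and all $|\xi|\ge 1$. The range $|\xi|\le 1$ is covered by the trivial $k=0$ bound, and passing from $|\xi|^{1/s}$ to $\lara{\xi}^{1/s}$ only changes the multiplicative constant.

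For part~(ii), Fourier inversion combined with the decay \eqref{star1} justifies differentiation under the integral, giving
\[
|\partial^\alpha h_\eps(x)|\le(2\pi)^{-n}\int_{\R^n}|\xi|^{|\alpha|}|\widehat{h_\eps}(\xi)|\,d\xi\le c'\eps^{-N}\int_{\R^n}|\xi|^{|\alpha|}\,\esp^{-c\eps^{1/s}\lara{\xi}^{1/s}}\,d\xi.
\]
The substitution $\eta=c^s\eps\,\xi$ followed by polar coordinates collapses the integral to $C(c,n,s)\,\eps^{-|\alpha|-n}\,\Gamma\bigl(s(|\alpha|+n)\bigr)$, and Stirling yields $\Gamma(sm)\le\overline{C}^{\,m}(m!)^s$ for $s\ge 1$. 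Combining this with $((|\alpha|+n)!)^s\le 2^{s(|\alpha|+n)}(\alpha!)^s(n!)^s$ produces precisely a bound of the required form $|\partial^\alpha h_\eps(x)|\le \widetilde{c}_K^{\,|\alpha|+1}(\alpha!)^s\,\eps^{-N-n-|\alpha|}$, that is, $\gamma^s$-moderateness with enlarged $\eps$-exponent $N+n$.

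The main technical hurdle is the optimisation step in part~(i): the balance $k^s\sim\eps|\xi|$ is the \emph{only} choice that converts the polynomial bound $(k!)^s\eps^{-k}|\xi|^{-k}$ into honest Gevrey-$s$ exponential decay, and absorbing the constant $C_3$ into a geometric factor strictly smaller than $1$ is what produces the exponential in \eqref{star1}. Once this is set up, part~(ii) is essentially automatic. Both computations are classical in Gevrey microlocal analysis; parameter-free statements appear in \cite{Rod:93}, and the parameter-dependent versions here differ only in the bookkeeping of the powers of $\eps$ through the same proofs.
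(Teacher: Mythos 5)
The paper does not prove this proposition itself; it defers to Proposition 4.3 of \cite{GR:14}. Your reconstruction is the standard Paley--Wiener-type Gevrey optimisation argument one would expect, and both directions are correct. Two small bookkeeping points are worth flagging. In part~(i), the degenerate range in which the optimal integer $k$ drops to $0$ is characterised by $\eps|\xi|$ being bounded, not simply $|\xi|\le 1$; the conclusion is unchanged there because $\eps\lara{\xi}$ is then bounded, so the exponential in \eqref{star1} is bounded below by a fixed positive constant and the trivial $k=0$ estimate $|\widehat{h_\eps}(\xi)|\le C\eps^{-N}$ already suffices. In part~(ii), the factorial inequality you invoke, $((|\alpha|+n)!)^s\le 2^{s(|\alpha|+n)}(\alpha!)^s(n!)^s$, is false in general (try $n=2$, $\alpha=(2,2)$); the correct split is $(|\alpha|+n)!\le 2^{|\alpha|+n}\,|\alpha|!\,n!$ combined with $|\alpha|!\le n^{|\alpha|}\alpha!$, which still yields a bound of the required form $\tilde c^{\,|\alpha|+1}(\alpha!)^s\,\eps^{-N-n-|\alpha|}$. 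Neither of these slips affects the soundness of the argument.
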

For a detailed proof of Proposition \ref{prop_reg_gevrey_mod} see \cite[Proposition 4.3]{GR:14}.

In this paper we will also make use of the spaces $\D'_{(s)}(\R^n)$ and $\E'_{(s)}(\R^n)$ of (Gevrey Beurling) ultradistributions and compactly supported ultradistributions, respectively. $\D'_{(s)}(\R^n)$ is the dual of the space $\gamma^{(s)}_c(\R^n)$ of compactly supported Gevrey Beurling functions. Recall that for $s\ge 1$, $f\in \Cinf(\R^n)$ belongs to $\gamma^{(s)}(\R^n)$ if for every compact set $K\subset\R^n$ and for every constant $A>0$ there exists a constant $C_{A,K}>0$ such that for all $\alpha\in{\N}^n$ the estimate
\beq
\label{GB_fun}
\sup_{x\in K}|\partial^\alpha f(x)|\le C_{A,K} A^{|\alpha|}(\alpha!)^s
\eeq
holds. 

In analogy with Gevrey functions, ultradistributions can be characterised by Fourier transform. This means that if $v\in \mathcal{E}'_{(s)}(\R^n)$ then there exist $\nu>0$ and $C>0$ such that
\beq
\label{ft_ultra}
|\widehat{v}(\xi)|\le C\,\esp^{\nu\lara{\xi}^{\frac{1}{s}}}
\eeq
for all $\xi\in\Rn$, and if a real analytic functional $v$ satisfies \eqref{ft_ultra} then $v\in\D'_{(s)}(\R^n)$.

\section{Regularisation and reduction to a system}
In this section we show how to regularise the Cauchy problem \eqref{intro_CP}. We start by analysing the coefficients of the principal part then we pass to the lower order terms, the right-hand side and the initial data.

We work under the assumption that the $m$ real roots $\lambda_j(t,\xi)$ of the equation
\[
D^m_t u-\sum_{j=1}^m\sum_{|\nu|= j} a_{\nu,j}(t)D_t^{m-j}D^\nu_x u-\sum_{j=1}^m\sum_{|\nu|<j} b_{\nu,j}(t)D^{m-j}_tD^\nu_x u=0
\]
are compactly supported and bounded in $t\in[0,T]$ and homogeneous of order $1$ in $\xi$, i.e. there exists $c>0$ such that
\[
|\lambda_{j}(t,\xi)|\le c|\xi|,
\] 
for all $t\in[0,T]$ and $\xi\neq 0$. Note that the dependence in $\xi$ is continuous and that the boundedness of the roots $\lambda_j$ forces the coefficients $a_{\nu,j}$ of the principal part to be bounded as well. We are quite general in the choice of the lower order terms, in the sense that we take distributions with compact support contained in $[0,T]$. It follows that $b_{\nu,j}\in\E'(\R)$ for $j=1,\dots,m$ and $|\nu|< j$.

Let $\varphi$ be a mollifier, i.e, $\varphi\in C^\infty_c(\R)$ with $\int\varphi=1$ and $\varphi_\eps(t)=\eps^{-1}\varphi(t/\eps)$. By convolution with the mollifier $\varphi_\eps$ we can regularise the roots $
\lambda_j\in L^\infty(\R)$ obtaining $m$ nets 
\beq
\label{moll_eigen}
\lambda_{j,\eps}(t,\xi)=(\lambda_j(\cdot,\xi)\ast\varphi_\eps)(t)
\eeq
 fulfilling the following property: for all $j=1,\dots,m$ and for all $k\in\N$ there exist $c>0$ such that
\beq
\label{lambda_eps}
|d^{(k)}_t\lambda_{j,\eps}(t,\xi)|\le c\eps^{-k}|\xi|,
\eeq
for all $t\in[0,T]$, $\xi\neq 0$ and $\eps\in(0,1]$. By setting
\beq
\label{charac_reg}
\tau^m-\sum_{j=1}^m\sum_{|\nu|= j} a_{\nu,j,\eps}(t)\tau^{m-j}\xi^\nu=\Pi_{j=1,\dots,m}(\tau-\lambda_{j,\eps}(t,\xi))
\eeq
we can find a way to approximate the equation's coefficients $a_{\nu,j}(t)$ which is  regular in $t$, i.e., $C^\infty$. In the sequel we will make use of the notation 
\[
\sigma_h^{(m)}(\lambda_\eps)=(-1)^h\sum_{1\le i_1<...<i_h\le m}\lambda_{i_1,\eps}...\lambda_{i_h,\eps},
\]
introduced in \cite{GR:12}, where $\lambda_\eps=(\lambda_{1,\eps},\lambda_{2,\eps},\dots,\lambda_{m,\eps})$, $h=0,\dots,m$ with $\sigma_0^{(m)}(\lambda_\eps)=1$. In this way we can write the right-hand side of \eqref{charac_reg} as 
\[
\sum_{j=0}^m \tau^{m-j}\sigma^{(m)}_j(\lambda_\eps).
\]
and conclude that
\beq
\label{formula_coeff}
-\sum_{|\nu|= j} a_{\nu,j,\eps}(t)\xi^\nu=\sigma_j^{(m)}(\lambda_\eps),
\eeq
for all $j=1,\dots,m$. More precisely we have the following proposition.
\begin{proposition}
\label{prop_eigen_reg}
Let the $m$ real roots $\lambda_{j}(t,\xi)$ of the equation \eqref{intro_eq} be compactly supported and bounded in $t\in[0,T]$.
Then every net $a_{\nu,j,\eps}$, $j=1,\dots,m$, defined by \eqref{formula_coeff} is $C^\infty$-moderate, in the sense that for all $k\in\N$ there exist $N\in\N$ and $c>0$ such that
\[
|d^{(k)}_t a_{\nu,j,\eps}(t)|\le c\,\eps^{-N},
\]
for all $t\in[0,T]$ and $\eps\in(0,1]$, 
and converges to $a_{\nu,j}$ in $L^\infty(\R)$ as $\eps\to 0$.

\end{proposition}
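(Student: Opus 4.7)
The plan is to reduce every required bound to the Leibniz rule applied to the product representation
\[
\sigma_j^{(m)}(\lambda_\eps)=(-1)^j\sum_{1\le i_1<\cdots<i_j\le m}\lambda_{i_1,\eps}\cdots\lambda_{i_j,\eps},
\]
starting from the derivative estimate \eqref{lambda_eps}. That estimate is itself an immediate consequence of mollifier calculus: writing $d^{(k)}_t\lambda_{j,\eps}(t,\xi)=(\lambda_j(\cdot,\xi)*\varphi^{(k)}_\eps)(t)$, using the scaling $\|\varphi^{(k)}_\eps\|_{L^1}=\eps^{-k}\|\varphi^{(k)}\|_{L^1}$ together with the hypothesis $|\lambda_j(t,\xi)|\le c|\xi|$ yields $|d^{(k)}_t\lambda_{j,\eps}(t,\xi)|\le c_k\eps^{-k}|\xi|$ uniformly in $t\in[0,T]$, $\xi\neq 0$ and $\eps\in(0,1]$.

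Next, Leibniz applied to a product $\lambda_{i_1,\eps}\cdots\lambda_{i_j,\eps}$ distributes $k$ time-derivatives across the $j$ factors; using the $k=0$ case for the undifferentiated ones and the $\eps^{-k_l}$ bound above for the others, each summand is controlled by $c_k\eps^{-k}|\xi|^j$. Summing over the $\binom{m}{j}$ subsets defining $\sigma_j^{(m)}$ gives
\[
|d^{(k)}_t\sigma_j^{(m)}(\lambda_\eps)(t,\xi)|\le C_k\eps^{-k}|\xi|^j.
\]
To pass from this symbol bound to the individual coefficients, I would read \eqref{formula_coeff} as an identity of homogeneous polynomials of degree $j$ in $\xi$ and pick once and for all a finite family $\{\xi^{(p)}\}$ on the unit sphere for which the linear map $\{a_{\nu,j}\}_{|\nu|=j}\mapsto\{\sum_\nu a_{\nu,j}(\xi^{(p)})^\nu\}_p$ is invertible. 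Then every $a_{\nu,j,\eps}(t)$ is a fixed linear combination of the values $\sigma_j^{(m)}(\lambda_\eps)(t,\xi^{(p)})$, and applying the previous bound at each $\xi^{(p)}$ yields $|d^{(k)}_t a_{\nu,j,\eps}(t)|\le c'_k\eps^{-k}$, which is the required $C^\infty$-moderateness with $N=k$.

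For the $L^\infty$ convergence I would combine the standard pointwise a.e.\ (and $L^p_{\mathrm{loc}}$) convergence $\lambda_{j,\eps}(\cdot,\xi)\to\lambda_j(\cdot,\xi)$, coming from mollification of an $L^\infty$ function, with the fact that $\sigma_j^{(m)}$ is a polynomial in its arguments, so that $\sigma_j^{(m)}(\lambda_\eps)\to\sigma_j^{(m)}(\lambda)$ in the matching sense at each fixed $\xi$; plugging this into the linear extraction of the previous paragraph transfers the convergence to each $a_{\nu,j,\eps}\to a_{\nu,j}$. The step I expect to be the real obstacle is precisely this extraction: \eqref{formula_coeff} tacitly treats $\sigma_j^{(m)}(\lambda_\eps)$ as a polynomial of degree $j$ in $\xi$, which is not automatic from pure $t$-mollification of the roots when $n>1$, since $\lambda_{j,\eps}$ is only homogeneous of degree one in $\xi$ and need not itself be polynomial. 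Pinning down this polynomial structure, either via the symbolic convention the author adopts or by a compatible $\xi$-treatment, is where the argument has to be anchored; once that is settled the Vandermonde-type inversion and all subsequent estimates propagate with no further difficulty.
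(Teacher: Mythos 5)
Your proposal follows essentially the same route as the paper: moderateness of the mollified roots from the $L^1$-scaling of $\varphi_\eps^{(k)}$, Leibniz to pass this on to the symmetric functions $\sigma_j^{(m)}(\lambda_\eps)$, and a finite linear extraction to recover the individual coefficients. The only real stylistic divergence is the extraction step: you propose inverting a fixed linear map given by evaluating the degree-$j$ form at a dual family $\{\xi^{(p)}\}$ on the unit sphere in one shot, whereas the paper peels off the coefficients iteratively, sorting the multi-indices $\nu$ by their number of zero entries (first $I_{n-1}$, then $I_{n-2}$, and so on) and substituting at each stage. Both are correct; yours is tidier, the paper's makes the choice of evaluation points explicit.

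Two remarks on the issues you flag, both of which are genuine and both of which are inherited from the paper rather than introduced by you. First, your worry about whether $\sigma_j^{(m)}(\lambda_\eps)(t,\cdot)$ really is a degree-$j$ polynomial in $\xi$ is well-founded for $n\ge 2$: the $t$-mollified roots $\lambda_{j,\eps}(t,\xi)$ are homogeneous of degree one in $\xi$ but in general not polynomial (and products of mollified roots are not the mollified original products), so their elementary symmetric functions need not be polynomials. This does not affect the estimates or the convergence asserted in Proposition \ref{prop_eigen_reg} — the extraction evaluates $\sigma_j^{(m)}(\lambda_\eps)$ only at finitely many $\xi$ and thereby \emph{defines} the nets $a_{\nu,j,\eps}$ — but it does mean that \eqref{formula_coeff} cannot hold at every $\xi$, so the Sylvester-form matrix $A(t,\xi)$ built from the $a_{\nu,j,\eps}$ in Section~3 does not have $\lambda_{j,\eps}(t,\xi)$ as exact eigenvalues for all $\xi$; you are right to press on that, since the symmetriser bounds in Section~3 rely precisely on those eigenvalues and their $\omega(\eps)\lara{\xi}$ separation. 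Second, the convergence your argument actually delivers for the mollified roots is pointwise a.e.\ and in $L^p_{\mathrm{loc}}$, which is all one gets from convolving an $L^\infty$ function with a mollifier; the proposition asserts convergence in $L^\infty(\R)$, which for a merely bounded and possibly discontinuous $\lambda_j(\cdot,\xi)$ fails without an additional continuity hypothesis. Your more cautious phrasing is the honest one.
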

\begin{proof}
First we prove that the nets  $a_{\nu,j,\eps}(t)$, $j=1,\dots,m$, are moderate. 

We begin by observing that since the nets $\lambda_{j,\eps}$, $j=1,\dots,m$ are moderate in the sense of \eqref{lambda_eps} then every $\sigma_j(\lambda_\eps)$ is moderate as well, i.e., for all $j=1,\dots,m$ and for all $k\in\N$ there exist $N\in\N$ and $c>0$ such that
\[
|d^{(k)}_t\sigma_j(\lambda_{\eps})(t,\xi)|\le c\eps^{-N}|\xi|^j,
\]
for all $t\in[0,T]$, $\xi\neq 0$ and $\eps\in(0,1]$. Making now use of \eqref{formula_coeff} we can prove by induction on $|\nu|$ that the coefficients $a_{\nu,j,\eps}$ of the characteristic polynomial are moderate as well. Indeed, if $|\nu|=1$ from  \eqref{formula_coeff} we get 
\[
-\sum_{|\nu|=1} a_{\nu,1,\eps}(t)\xi^\nu=\sigma_1^{(m)}(\lambda_\eps)=-\sum_{j=1}^m \lambda_{j,\eps}(t,\xi),
\]
which can be rewritten as
\[
\sum_{k=1}^n a_{\nu_k,1,\eps}(t)\xi_k=\sum_{j=1}^m \lambda_{j,\eps}(t,\xi),
\]
with $\nu_k$ $n$-index with $k$-entry equal to 1 and otherwise $0$. Let $e_k\in\R^n$ with $k$-entry equal to 1 and all the others 0. From the previous formula and \eqref{lambda_eps} we obtain 
\[
a_{\nu_k,1,\eps}(t)=\sum_{j=1}^m \lambda_{j,\eps}(t,e_k)
\]
which proves that the coefficients $a_{\nu_k,1,\eps}(t)$ are moderate for $k=1,\dots,n$. Assume now to have proved that the coefficients $a_{\nu,j,\eps}(t)$ are moderate for $|\nu|=j$ and $j=1,\dots,m-1$. We want to prove that $a_{\nu,m,\eps}(t)$ is moderate for $|\nu|=m$. From \eqref{formula_coeff} we have that
\[
-\sum_{|\nu|= m} a_{\nu,m,\eps}(t)\xi^\nu=\sigma_m^{(m)}(\lambda_\eps)=(-1)^m\lambda_{1,\eps}(t,\xi)\lambda_{2,\eps}(t,\xi)\cdots\lambda_{m,\eps}(t,\xi).
\]
By writing the previous formula as
\begin{multline}
\label{formula_imp}
-\sum_{\nu\in I_{n-1}} a_{\nu,m,\eps}(t)\xi^\nu-\sum_{\nu\in I_{n-2}} a_{\nu,m,\eps}(t)\xi^\nu-\cdots -\sum_{\nu\in I_0} a_{\nu,m,\eps}(t)\xi^\nu\\
=(-1)^m\lambda_{1,\eps}(t,\xi)\lambda_{2,\eps}(t,\xi)\cdots\lambda_{m,\eps}(t,\xi),
\end{multline}
where $I_j$ is the set of all multi-indexes $\nu$ with length $m$ having $j$ 0-entries, $j=0,\dots n-1$, we easily see that the coefficients $a_{\nu,m,\eps}(t)$ with $\nu\in I_{n-1}$ are moderate. Indeed, for $k=1,\dots,n$, $\nu_k$ multi-index with $k$-entry equal to $m$ and all the others $0$ and $e_k\in\R^n$ defined above, we obtain
\beq
\label{formula_1}
-a_{\nu_k,m,\eps}(t)=(-1)^m\lambda_{1,\eps}(t,e_k)\lambda_{2,\eps}(t,e_k)\cdots\lambda_{m,\eps}(t,e_k).
\eeq
In other words by suitably choosing the points $e_k$ we have found an invertible $n\times n$-matrix $A$ and $n$-moderate nets $r_{1,\eps}(t),\dots r_{n,\eps}(t)$ such that 
\beq
\label{formula_A}
A
                      \left(\begin{array}{c}
                              a_{\nu_1,m,\eps}(t) \\
                                a_{\nu_2,m,\eps}(t)\\
                                \dots\\
                                a_{\nu_n,m,\eps}(t)
                             \end{array}
                           \right)=
                           \left(\begin{array}{c}
                              r_{1,\eps}(t) \\
                                r_{2,\eps}(t)\\
                                \dots\\
                                r_{n,\eps}(t)
                             \end{array}
                           \right)
\eeq
We now bring $-\sum_{\nu\in I_{n-1}} a_{\nu,m,\eps}(t)\xi^\nu$ on the right-hand side of \eqref{formula_imp} and we focus on $\sum_{\nu\in I_{n-2}} a_{\nu,m,\eps}(t)\xi^\nu$. Note that by what we have just proved the new right-hand side will be moderate in $t$. Let $l$ be the number of the elements of $I_{n-2}$. Arguing as above, by suitably choosing a finite number of points $\xi$ we can generate an invertible $l\times l$-matrix $B$ and $l$-moderate nets $s_{1,\eps}(t),\dots s_{l,\eps}(t)$ such that
\beq
\label{formula_B}
B
\left(\begin{array}{c}
                              a_{\nu'_1,m,\eps}(t) \\
                                a_{\nu'_2,m,\eps}(t)\\
                                \dots\\
                                a_{\nu'_l,m,\eps}(t)
                             \end{array}
                           \right)=
                           \left(\begin{array}{c}
                              s_{1,\eps}(t) \\
                                s_{2,\eps}(t)\\
                                \dots\\
                                s_{l,\eps}(t)
                             \end{array}
                           \right),
\eeq
where $\nu'_1,\nu'_2\dots,\nu'_l$ are the elements of $I_{n-2}$.  It follows that all the coefficients $a_{\nu,m,\eps}(t)$ with $\nu\in I_{n-2}$ are moderate nets. We can now also bring -$\sum_{\nu\in I_{n-2}} a_{\nu,m,\eps}(t)\xi^\nu$ to the right-hand side of \eqref{formula_imp} and conclude that
\[
-\sum_{\nu\in I_{n-3}} a_{\nu,m,\eps}(t)\xi^\nu-\cdots -\sum_{\nu\in I_0} a_{\nu,m,\eps}(t)\xi^\nu
\]
is equal to a $t$-moderate net. By iterating the previous argument a finite number of times we conclude that all the coefficients $a_{\nu,m,\eps}(t)$ with $\nu\in I_j$, $j=n-3, \dots, 1, 0$ are moderate too. 

The proof by induction above can be also used to prove that the coefficients $a_{\nu,m,\eps}(t)$ converge to $a_{\nu,m}$ in $L^\infty(\R)$. Recall that by construction (regularisation with a mollifier) $\lambda_{j,\eps}\to \lambda_j$ in $L^\infty(\R)$ for all $j=1,\dots,m$. Take now $|\nu|=1$ in \eqref{formula_coeff}. From
\[
a_{\nu_k,1,\eps}(t)=\sum_{j=1}^m \lambda_{j,\eps}(t,e_k), 
\]
where $\nu_k$ and $e_k$ are the $n$-index and the element of $\R^n$ respectively, with $k$-entry equal to 1 and otherwise $0$, $k=1,\dots,n$, we easily see that 
\[
a_{\nu_k,1,\eps}(t)=\sum_{j=1}^m \lambda_{j,\eps}(t,e_k)\to \sum_{j=1}^m \lambda_{j}(t,e_k)=a_{\nu_k,1}(t)
\]
in $L^\infty(\R)$ as $\eps\to 0$. This proves that $a_{\nu,1,\eps}$ is a regularisation of $a_{\nu,1}$ when $|\nu|=1$. Assume now that $a_{\nu,j,\eps}\to a_{\nu,j}$ in $L^\infty(\R)$ for $|\nu|=j$ and $j\le m-1$. We still need to prove that $a_{\nu,m,\eps}\to a_{\nu,m}$ in $L^\infty(\R)$ for $|\nu|=m$. As in the first part of the proof we set
\[
\{\nu: |\nu|=m\}=\cup_{j=n-1,n-2,\dots,0}\,I_j
\]
where $I_j$ is the set of all multi-indexes with length $m$ having $j$ 0-entries. From \eqref{formula_1} or equivalently the matrix expression \eqref{formula_A} we have that 
\begin{multline*}
-a_{\nu_k,m,\eps}(t)=(-1)^m\lambda_{1,\eps}(t,e_k)\lambda_{2,\eps}(t,e_k)\cdots\lambda_{m,\eps}(t,e_k)\\
\to (-1)^m\lambda_{1}(t,e_k)\lambda_{2}(t,e_k)\cdots\lambda_{m}(t,e_k)= -a_{\nu_k,m}(t)
\end{multline*}
in $L^\infty(\R)$, for any $\nu_k$ defined above. Note that in \eqref{formula_B} the invertible matrix $B$ does not depend on $\eps$ while the right-hand side can be seen as a matrix-valued function $F$ independent on $\eps$ which depends continuously on $a_{\nu,m,\eps}$ with $\nu\in I_{n-1}$ and the regularised roots $\lambda_{j,\eps}$. It follows that $a_{\nu,m,\eps}\to a_{\nu,m}$ in $L^\infty(\R)$ for every $\nu\in I_{n-2}$. By iterating the same argument a finite number of times we conclude that $a_{\nu,m,\eps}\to a_{\nu,m}$ in $L^\infty(\R)$ for all $\nu\in I_j$ with $j=n-2,\dots,1,0$. 
\end{proof}
\begin{remark}
\leavevmode
\begin{trivlist}
\item[(i)] Adopting the language of Definition \ref{def_mod_intro} we  can summarise Proposition \ref{prop_eigen_reg} as follows: there exists a $C^\infty$-moderate regularisation of the coefficients $a_{\nu,j}$ of the principal part. 
\item[(ii)]
Note that Proposition \ref{prop_eigen_reg} holds for any $C^\infty$-moderate regularisation of the roots $\lambda_j$, i.e., for any net $\lambda_{j,\eps}(\cdot,\xi)$ converging to $\lambda_j(\cdot,\xi)$ in $L^\infty$  as $\eps\to 0$ such that \eqref{lambda_eps} holds. In particular in this paper instead of $\eps$ we will use a general net $\omega(\eps)\to 0$ and a regularisation which separates the roots as well, i.e., we will set
\beq
\label{eigen_reg_omega}
\lambda_{j,\eps}(t,\xi)=(\lambda_j(\cdot,\xi)\ast\varphi_{\omega(\eps)})(t)+j\omega(\eps)\lara{\xi},
\eeq
with $\omega(\eps)\ge c\eps^{r}$ for some $c,r>0$ and $\varphi$ mollifier as in \eqref{moll_eigen}. The corresponding regularised operator
\[
D^m_t-\sum_{j=1}^m\sum_{|\nu|= j} a_{\nu,j,\eps}(t)D_j^{m-j}D^\nu_x 
\]
is therefore strictly hyperbolic with smooth coefficients whereas the original operator might be weakly hyperbolic. 
\end{trivlist}
\end{remark}

We now pass to consider the lower order terms, the initial data and the right-hand side of the Cauchy problem \eqref{intro_CP}.

\subsection{Regularisation of the lower order terms}

The lower order terms $b_{\nu,j}\in\E'(\R)$ can be easily regularised via convolution with a mollifier $\varphi_{\omega(\eps)}$ as the one used for the eigenvalues $\lambda_j$ in \eqref{eigen_reg_omega}. Indeed, by the structure theorem of compactly supported distributions and dominated convergence we have that the net
\[
b_{\nu,j,\eps}(t)=(b_j\ast\varphi_{\omega(\eps)})(t)
\]
is $C^\infty$-moderate and converges to $b_{\nu,j}$ in $\E'(\R)$ as $\eps\to 0$.

\subsection{Regularisation of the initial data}

Before proceeding with this regularisation, it is useful to recall a few results proven in \cite{GR:14} which will allow us to find suitable regularisations.

First we introduce the Gelfand-Shilov space $\mathcal{S}^{(s)}(\R^n)$, $s>1$, of all $f\in\Cinf(\R^n)$ such that
\[
\Vert f\Vert_{b,s}=\sup_{\alpha,\beta\in\N^n}\int_{\R^n}\frac{|x^\beta|}{b^{|\alpha+\beta|}\alpha !^s \beta !^s}|\partial^\alpha f(x)|\, dx<\infty
\]
for all $b>0$.

Since $\mathcal{S}^{(s)}(\R^n)$ is Fourier transform invariant (see e.g. \cite[Chapter 6]{NiRo:10} and \cite{Teo:06}) it follows that the inverse Fourier transform $\phi=\mF^{-1}\psi$ of a function $\psi\in\mathcal{S}^{(s)}(\R^n)$ identically $1$ in a neighborhood of $0$ is a function $\phi\in\mathcal{S}^{(s)}(\R^n)$ with
\beq
\label{mollifier}
\int\phi(x)\, dx=1,\qquad \text{and}\qquad \int x^\alpha\phi(x)\, dx=0,\quad \text{for all $\alpha\neq 0$.}
\eeq
For instance, one can take $\psi\in \gamma^{(s)}(\R^n)\cap \Cinfc(\R^n)$, where $\gamma^{(s)}(\R^n)$ is the space of Gevrey Beurling functions defined in \eqref{GB_fun}. 

We say that $\phi\in\mathcal{S}^{(s)}(\R^n)$ is a mollifier if the property \eqref{mollifier} holds. Finally, as in as in \cite{BenBou:09} let us define
\beq
\label{def_rho}
\rho_{\omega(\eps)}(x):=\omega(\eps)^{-n}\phi\biggl(\frac{x}{\omega(\eps)}\biggr)\chi(x|\log
\omega(\eps)|),
\eeq
where $\chi\in \gamma^s(\R^n)$ with $0\le\chi\le 1$, $\chi(x)=0$ for $|x|\ge 2$ and $\chi(x)=1$ for $|x|\le 2$. By construction this is a net of Gevrey functions of order $s$.
  
Proposition 6.1 in \cite{GR:14} investigates the convolution of compactly supported distributions with the mollifier $\rho_{\omega(\eps)}$ as follows.
\begin{proposition}
 \label{prop_reg_distr}
Let $u\in\E'(\R^n)$ and $\rho_{\omega(\eps)}$ as in \eqref{def_rho} . Then, there exists $K\Subset\R^n$ such that $\supp(u\ast\rho_{\omega(\eps)})\subseteq K$ for all $\eps$ small enough and there exist $C>0$, $N\in\N$ and $\eta\in(0,1]$ such that
\[
|\partial^\alpha(u\ast\rho_{\omega(\eps)})(x)|\le C^{|\alpha|+1} (\alpha !)^s \omega(\eps)^{-|\alpha|-N}
\]
for all $\alpha\in\N^n$, $x\in\R^n$ and $\eps\in(0,\eta]$.
\end{proposition}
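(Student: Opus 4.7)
\smallskip

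\textbf{Plan of proof for Proposition \ref{prop_reg_distr}.}

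The proof splits naturally into a support statement and a derivative estimate. For the support, I would observe that $\supp(u\ast\rho_{\omega(\eps)})\subseteq\supp u+\supp\rho_{\omega(\eps)}$, and inspect the second factor: since $\chi(y|\log\omega(\eps)|)$ vanishes for $|y|\ge 2/|\log\omega(\eps)|$, one has $\supp\rho_{\omega(\eps)}\subseteq \overline{B(0,2/|\log\omega(\eps)|)}$, which is contained in the unit ball once $\omega(\eps)$ is small enough. Hence $K:=\supp u+\overline{B(0,1)}$ works for all $\eps\in(0,\eta]$ for some $\eta>0$.

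For the derivative estimate, I would exploit that $u\in\E'(\R^n)$ has finite order, say $k$, so that
\[
|\langle u,\psi\rangle|\le C_u\sum_{|\beta|\le k}\sup_{y\in W}|\partial^\beta\psi(y)|
\]
for every $\psi\in C^\infty(\R^n)$ and for a fixed relatively compact neighbourhood $W$ of $\supp u$. Applying this with $\psi(y)=\partial_x^\alpha\rho_{\omega(\eps)}(x-y)$ reduces the proposition to bounding $\sup_{y}|\partial^{\alpha+\beta}\rho_{\omega(\eps)}(x-y)|$ for $|\beta|\le k$.

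The core of the estimate is then Leibniz applied to the product defining $\rho_{\omega(\eps)}$:
\[
\partial^\mu \rho_{\omega(\eps)}(z)=\sum_{\gamma\le\mu}\binom{\mu}{\gamma}\omega(\eps)^{-n-|\gamma|}(\partial^\gamma\phi)(z/\omega(\eps))\,|\log\omega(\eps)|^{|\mu-\gamma|}(\partial^{\mu-\gamma}\chi)(z|\log\omega(\eps)|),
\]
with $\mu=\alpha+\beta$. Since $\phi\in\mathcal{S}^{(s)}(\R^n)$ satisfies $|\partial^\gamma\phi(y)|\le A^{|\gamma|+1}(\gamma!)^s$ uniformly in $y$, and $\chi\in\gamma^s(\R^n)$ satisfies $|\partial^\delta\chi(y)|\le B^{|\delta|+1}(\delta!)^s$ on the compact set where $\chi$ is non-constant, I would combine these with $\binom{\mu}{\gamma}(\gamma!)^s((\mu-\gamma)!)^s\le 2^{|\mu|}(\mu!)^s$ to obtain
\[
|\partial^\mu\rho_{\omega(\eps)}(z)|\le D^{|\mu|+1}(\mu!)^s\,\omega(\eps)^{-n-|\mu|}\,(1+|\log\omega(\eps)|)^{|\mu|}
\]
on all of $\R^n$ (the $\omega(\eps)^{-n-|\mu|}$ dominates because $|\log\omega(\eps)|\le \omega(\eps)^{-1}$ for $\eps$ small, and only the largest $\gamma$ matters).

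The last step, which I expect to be the main technical point, is absorbing the logarithmic factor. Using $|\log\omega(\eps)|\le C_\delta\,\omega(\eps)^{-\delta}$ (any $\delta>0$, $\eps$ small), the quantity $(1+|\log\omega(\eps)|)^{|\mu|}$ is bounded by $C_\delta^{|\mu|+1}\omega(\eps)^{-\delta|\mu|}$, which rolls into the $\omega(\eps)^{-|\mu|}$ factor at the cost of enlarging the constant. Substituting back into the duality bound from the finite-order inequality, with $|\mu|\le |\alpha|+k$, I obtain
\[
|\partial^\alpha(u\ast\rho_{\omega(\eps)})(x)|\le C^{|\alpha|+1}(\alpha!)^s\,\omega(\eps)^{-|\alpha|-N}
\]
for $N=n+k$ (plus a fixed absorbing constant), which is the required estimate. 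The assumption $s>1$ is what makes the Gelfand--Shilov space $\mathcal{S}^{(s)}$ rich enough to contain a mollifier satisfying \eqref{mollifier}, so the construction of $\phi$ is not an obstacle; the only genuinely delicate bookkeeping is the Leibniz-plus-logarithm step just described.
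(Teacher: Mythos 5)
Your decomposition is the right one — support by the convolution support rule, and the derivative estimate by pairing a finite-order $u$ against Leibniz derivatives of $\rho_{\omega(\eps)}$. Where the argument breaks is at what you call the ``main technical point'': the absorption of the logarithm. From your intermediate bound
\[
|\partial^\mu\rho_{\omega(\eps)}(z)|\le D^{|\mu|+1}(\mu!)^s\,\omega(\eps)^{-n-|\mu|}\,(1+|\log\omega(\eps)|)^{|\mu|},
\]
replacing $(1+|\log\omega(\eps)|)^{|\mu|}$ by $C_\delta^{|\mu|+1}\omega(\eps)^{-\delta|\mu|}$ does \emph{not} ``roll into the $\omega(\eps)^{-|\mu|}$ factor at the cost of enlarging the constant'': it changes the exponent to $-(1+\delta)|\mu|$, and since $|\mu|$ runs over $|\alpha|+O(1)$ this produces $\omega(\eps)^{-(1+\delta)|\alpha|-O(1)}$, which is \emph{not} of the form $\omega(\eps)^{-|\alpha|-N}$ with $N$ independent of $\alpha$. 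The quantity $\omega(\eps)^{-\delta|\alpha|}$ is not a constant; it blows up as $\eps\to 0$ for every fixed $\alpha\neq 0$, and blows up faster the larger $|\alpha|$ is.

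The fix is to never pull the logarithm out of the Leibniz sum in the first place. In the $\gamma$-th term you have the factor $\omega(\eps)^{-n-|\gamma|}\,|\log\omega(\eps)|^{|\mu-\gamma|}$. Since $t|\log t|\le e^{-1}<1$ on $(0,1)$, i.e.\ $|\log\omega(\eps)|\le\omega(\eps)^{-1}$ as soon as $\omega(\eps)<1$, you get term by term
\[
\omega(\eps)^{-n-|\gamma|}\,|\log\omega(\eps)|^{|\mu-\gamma|}\le\omega(\eps)^{-n-|\gamma|}\,\omega(\eps)^{-|\mu-\gamma|}=\omega(\eps)^{-n-|\mu|},
\]
\emph{uniformly in $\gamma$ and with no residual logarithm}. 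Combining this with $\binom{\mu}{\gamma}(\gamma!)^s((\mu-\gamma)!)^s\le(\mu!)^s$ and the at most $2^{|\mu|}$ terms in the sum gives $|\partial^\mu\rho_{\omega(\eps)}(z)|\le D^{|\mu|+1}(\mu!)^s\omega(\eps)^{-n-|\mu|}$ cleanly. Then with $\mu=\alpha+\beta$, $|\beta|\le k$, you need $((\alpha+\beta)!)^s\le 2^{s(|\alpha|+k)}(\alpha!)^s(k!)^s$ to convert to $(\alpha!)^s$, after which $N=n+k$ works as you intended. With this repair your proof is sound and follows essentially the same route as the reference.
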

Note that it is not restrictive to set $\eta=1$ in Proposition \ref{prop_reg_distr} and that $(u\ast\rho_{\omega(\eps)})_\eps$ is a $\gamma^s$-moderate regularisation of $u$. Indeed, the $\gamma^s$-moderateness is stated above and by the structure theorem for compactly supported distributions and dominated convergence $u\ast\rho_{\omega(\eps)}\to u$ in $\E'(\R^n)$ as $\eps\to 0$. This is the kind of regularisation that we use for the initial data $g_k$ in this paper. 

\subsection{Regularisation of the right-hand side}
The right-hand side $f$ is regularised with a usual mollifier $\varphi_{\omega(\eps)}$ in the variable $t$ and with a mollifier $\rho_{\omega(\eps)}$ as above in the variable $x$. More precisely, we set
\[
f_\eps(t,x)=(f\ast (\varphi_{\omega(\eps)}\otimes\rho_{\omega(\eps)}))(t,x)
\]
and we get a $C^\infty$-moderate net with respect to $t$ and a $\gamma^s$-moderate net with respect to $x$.

\subsection{Conclusion}

We regularise the Cauchy problem 
\[
\begin{split}
D^m_t u-\sum_{j=1}^m\sum_{|\nu|= j} a_{\nu,j}(t)D_t^{m-j}D^\nu_x u-\sum_{j=1}^m\sum_{|\nu|<j} b_{\nu,j}(t)D^{m-j}_tD^\nu_x u &=f(t,x),\quad t\in[0,T], x\in\R^n,\\
D_t^{k}u(0,x)&=g_k,\quad k=0,\cdots,m-1.
\end{split}
\]
by setting
\[
\begin{split}
D^m_t u-\sum_{j=1}^m\sum_{|\nu|= j} a_{\nu,j,\eps}(t)D_t^{m-j}D^\nu_x u-\sum_{j=1}^m\sum_{|\nu|<j} b_{\nu,j,\eps}(t)D^{m-j}_tD^\nu_x u &=f_\eps(t,x),\\ 
D_t^{k}u(0,x)&=g_{k,\eps},
\end{split}
\]
where
\begin{itemize}
\item[(i)] $a_{\nu,j,\eps}$ is a $C^\infty$-moderate regularisation of $a_{\nu,j}$ for $|\nu|= j$ and $j=1,\dots,m$ obtained through
\[
\lambda_{j,\eps}(t,\xi)=(\lambda_j(\cdot,\xi)\ast\varphi_{\omega(\eps)})(t)+j\omega(\eps)\lara{\xi},
\]
$j=1,\dots,m$,
\item[(ii)] $b_{\nu,j,\eps}$  is a $C^\infty$-moderate regularisation of $b_{\nu,j}$ for $|\nu|< j$ and $j=1,\dots,m$, of the type
\[
b_{\nu,j,\eps}(t)=(b_{\nu,j}\ast\varphi_{\omega(\eps)})(t),
\]
\item[(iii)] $f_\eps$ is a $C^\infty([0,T];\gamma^s(\R^n))$-moderate regularisation of $f$, of the type
\[
f_\eps(t,x)=(f\ast (\varphi_{\omega(\eps)}\otimes\rho_{\omega(\eps)}))(t,x),
\]
\item[(iii)] $g_{k,\eps}$ is a $\gamma^s$-moderate regularisations of $g_k$ for $k=0,\dots,m-1$, of the type
\[
g_{k,\eps}=g\ast\rho_{\omega(\eps)}.
\]
\end{itemize}
More precisely, $a_{\nu,j,\eps}\to a_{\nu,j}$ in $L^\infty(\R)$ as $\eps\to 0$, $b_{\nu,j,\eps}\to b_{\nu,j}$ in $\E'(\R)$,  $g_{k,\eps}\to g_k$ in $\E'(\R^n)$ and $f_\eps\to f$ in $\E '(\R)\otimes \E'(\R^n)$ as $\eps\to 0$. The regularised Cauchy problem obtained in this way is strictly hyperbolic and has smooth coefficients. The net $\omega(\eps)$ will be suitably chosen in the sequel to guarantee the existence of a very weak solution.

\section{Very weak solutions}
In this section we prove that the Cauchy problem \eqref{intro_CP} admits very weak solutions, i.e. we prove Theorem \ref{main_theo}:\\
  
\emph{The hyperbolic Cauchy problem
\[
\begin{split}
D^m_t u-\sum_{j=1}^m\sum_{|\nu|= j} a_{\nu,j}(t)D_t^{m-j}D^\nu_x u-\sum_{j=1}^m\sum_{|\nu|<j}b_{\nu,j}(t)D^{m-j}_tD^\nu_x u &=f(t,x),\, t\in[0,T], x\in\R^n,\\
D_t^{k}u(0,x)&=g_k,\quad k=0,\cdots,m-1,
\end{split}
\]
where the equation coefficients are compactly supported in $t$, $a_{\nu,j}\in L^\infty([0,T])$ for $|\nu|=j$, $j=1,\dots,m$, $b_{\nu,j}\in\E'([0,T])$ for $|\nu|<j$, $j=1,\dots,m$, $f\in \E'([0,T])\otimes\E'(\R^n)$ and $g_k\in\E'(\R^n)$ for all $k=0,\dots,m-1$, has a very weak solution of order $s$ for any $s>1$.}\\
 
Theorem \ref{main_theo} is proven by energy estimates after reduction to a first order system of pseudodifferential equations.

\subsection{Reduction to system and energy estimates}

We perform a reduction to a first order system as in \cite{GR:11}. Let  $\lara{D_x}$ be
the pseudo-differential operator with symbol $\lara{\xi}$. The transformation
\[
u_k=D_t^{k-1}\lara{D_x}^{m-k}u,
\]
with $k=1,...,m$, makes the Cauchy problem 
\beq
\label{CP_reg}
\begin{split}
D^m_t u-\sum_{j=1}^m\sum_{|\nu|= j} a_{\nu,j,\eps}(t)D_t^{m-j}D^\nu_x u-\sum_{j=1}^m\sum_{|\nu|<j} b_{\nu,j,\eps}(t)D^{m-j}_tD^\nu_x u &=f_\eps(t,x),\\
D_t^{k}u(0,x)&=g_{k,\eps},\, k=0,\cdots,m-1,
\end{split} 
\eeq
equivalent to the following system

\beq
\label{syst_Taylor}
D_t\left(
                             \begin{array}{c}
                               u_1 \\
                               \cdot \\
                               \cdot\\
                               u_m \\
                             \end{array}
                           \right)
= \left(
    \begin{array}{ccccc}
      0 & \lara{D_x} & 0 & \dots & 0\\
      0 & 0 & \lara{D_x} & \dots & 0 \\
      \dots & \dots & \dots & \dots & \lara{D_x} \\
      l_{1,\eps} & l_{2,\eps} & \dots & \dots & l_{m,\eps} \\
    \end{array}
  \right)
  \left(\begin{array}{c}
                               u_1 \\
                               \cdot \\
                               \cdot\\
                               u_m \\
                             \end{array}
                           \right)
                           +
\left(\begin{array}{c}
                               0 \\
                               0 \\
                               \cdot\\
                               f_\eps \\
                             \end{array}
                           \right),
\eeq
where
\[
l_{j,\eps}=\sum_{|\nu|=m-j+1}a_{\nu,j,\eps}(t)D_x^\nu\lara{D_x}^{j-m}+\sum_{|\nu|=m-j}b_{\nu,j,\eps}(t)D_x^\nu\lara{D_x}^{j-m},
\] 
with initial condition
\beq
\label{ic_Taylor}
u_k|_{t=0}=\lara{D_x}^{m-k}g_{k-1,\eps},\qquad k=1,...,m.
\eeq
Let us denote the principal part $\sum_{|\nu|=m-j+1}a_{\nu,j,\eps}(t)D_x^\nu\lara{D_x}^{j-m}$of $l_{j,\eps}$ with $l_{(j,\eps)}$. Hence, the matrix in \eqref{syst_Taylor} can be written as $A+B$, with
\[
A=\left(
    \begin{array}{ccccc}
      0 & \lara{D_x} & 0 & \dots & 0\\
      0 & 0 & \lara{D_x} & \dots & 0 \\
      \dots & \dots & \dots & \dots & \lara{D_x} \\
      l_{(1,\eps)} & l_{(2,\eps)} & \dots & \dots & l_{(m,\eps)} \\
    \end{array}
  \right),
\]
and
\[
B=\left(
    \begin{array}{ccccc}
      0 & 0 & 0 & \dots & 0\\
      0 & 0 & 0& \dots & 0 \\
      \dots & \dots & \dots & \dots & 0 \\
      l_{1,\eps}-l_{(1,\eps)} & l_{2,\eps}-l_{(2,\eps)} & \dots & \dots & l_{m,\eps}-l_{(m,\eps)} \\
    \end{array}
  \right).
\]
By construction the roots $\lambda_{j,\eps}$ of the equation in \eqref{CP_reg} are the eigenvalues of the matrix $A$.
 
By Fourier transforming both sides of \eqref{syst_Taylor} we obtain the system
\beq
\label{system_new}
\begin{split}
D_t V&=A(t,\xi)V+B(t,\xi)V+\widehat{F}(t,\xi),\\
V|_{t=0}(\xi)&=V_0(\xi),
\end{split}
\eeq
where $V$ is the $m$-column with entries $v_k=\widehat{u}_k$, $V_0$ is the $m$-column with entries
$v_{0,k}=\lara{\xi}^{m-k}\widehat{g}_{k-1,\eps}$ and
\[
A(t,\xi)=\left(
    \begin{array}{ccccc}
      0 & \lara{\xi} & 0 & \dots & 0\\
      0 & 0 & \lara{\xi} & \dots & 0 \\
      \dots & \dots & \dots & \dots & \lara{\xi} \\
      l_{(1,\eps)}(t,\xi) & l_{(2,\eps)}(t,\xi) & \dots & \dots & l_{(m,\eps)}(t,\xi) \\
    \end{array}
  \right), 
\]
\[
B(t,\xi)=\left(
    \begin{array}{ccccc}
      0 & 0 & 0 & \dots & 0\\
      0 & 0 & 0& \dots & 0 \\
      \dots & \dots & \dots & \dots & 0 \\
      (l_{1,\eps}-l_{(1,\eps)})(t,\xi) & \dots & \dots & \dots & (l_{m,\eps}-l_{(m,\eps)})(t,\xi) \\
    \end{array}
  \right), 
\]
and
\[
\widehat{F}(t,\xi)=\left(\begin{array}{c}
                               0 \\
                               0 \\
                               \vdots\\
                               {\widehat{f}}_\eps(t,\cdot)(\xi) \\
                             \end{array}
                           \right).
\]
In the sequel we will focus on the system \eqref{system_new}. This is a strictly hyperbolic system with smooth coefficients. So, by the well-posedness results proven in \cite{GR:11} (Remark 8) we know that for all $\eps\in(0,1]$ and for all $s>1$ the Cauchy problem \eqref{system_new} has a net of solutions $V_\eps$ which, by inverse Fourier transform and by the transformation $u_k=D_t^{k-1}\lara{D_x}^{m-k}u$ above, generates the solution  $u_\eps\in C^\infty([0,T], \gamma^s(\R^n))$ to the Cauchy problem \eqref{CP_reg}. It is our task to prove that the net of solutions $u_\eps$ is $C^\infty([0,T], \gamma^s(\R^n))$-moderate. This will allow us to conclude that the Cauchy problem \eqref{intro_CP} admits a very weak solution of order $s$ and to prove Theorem \ref{main_theo}.

\subsection{Symmetriser}
The existence result in Theorem \ref{main_theo} will be deduced via energy estimates. The energy will be defined using the symmetriser of the (strictly) hyperbolic matrix
\[
 A\lara{\xi}^{-1}=\left(
    \begin{array}{ccccc}
      0 & 1 & 0 & \dots & 0\\
      0 & 0 & 1 & \dots & 0 \\
      \dots & \dots & \dots & \dots & 1 \\
      l_{(1,\eps)}(t,\xi)\lara{\xi}^{-1} & l_{(2,\eps)}(t,\xi)\lara{\xi}^{-1} & \dots & \dots & l_{(m,\eps)}(t,\xi)\lara{\xi}^{-1} \\
    \end{array}
  \right)
\]
In the sequel we collect some basic facts concerning symmetrisers mainly obtained adapting the corresponding results in \cite{J:09, JT}. We begin by stating that since the matrix $A\lara{\xi}^{-1}$ is hyperbolic with eigenvalues 
\[
\lambda_{1,\eps}(t,\xi)\lara{\xi}^{-1}\le\lambda_{2,\eps}(t,\xi)\lara{\xi}^{-1}\le\cdots\le\lambda_{m,\eps}(t,\xi)\lara{\xi}^{-1},
\]
(it is not restrictive to assume that the roots $\lambda_j$, $j=1,\dots,m$, are ordered and therefore their regularisations are ordered as well) then there exists a real symmetric $m\times m$-matrix $S(t,\xi)$ with $0$-order entries such that $SA-A^\ast S=0$ and 
\beq
\label{detS}
{\rm det}S(t,\xi)=\prod_{1\le j<i\le m}(\lambda_{i,\eps}(t,\xi)-\lambda_{j,\eps}(t,\xi))^2\lara{\xi}^{-2}
\eeq
(see \cite{J:09}). In particular, we have the following lemma (Lemma 2.1 in \cite{JT}). \begin{lemma}
\label{lemmaJT}
Let $N(t)$ be any symmetric positive semi-definite matrix with bounded coefficients on an interval $[a,b]$. Then, there exist two positive constants $c_1$ and $c_2$ depending only on the $L^\infty$-norm of the entries of $N(t)$ such that
\[
c_1\det N(t)|V|^2\le (N(t)V,V)\le c_2|V|^2
\]
holds for all $t\in[a,b]$ and $V\in\C^m$.
\end{lemma}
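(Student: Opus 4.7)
The plan is to exploit the spectral decomposition of the real symmetric positive semi-definite matrix $N(t)$. Denote its eigenvalues by $0\le\mu_1(t)\le\mu_2(t)\le\cdots\le\mu_m(t)$, so that for every $V\in\C^m$
\[
\mu_1(t)|V|^2\le (N(t)V,V)\le \mu_m(t)|V|^2,\qquad \det N(t)=\prod_{i=1}^m\mu_i(t).
\]
Both bounds in the lemma will be extracted from this decomposition, with the upper bound reducing to estimating $\mu_m(t)$ and the lower bound forcing a coupling between $\mu_1(t)$ and the full determinant.

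For the upper bound the route is short. The largest eigenvalue satisfies $\mu_m(t)\le\|N(t)\|$ for any operator norm, and the Frobenius norm, in particular, is controlled by $m$ times the supremum over $[a,b]$ of the $L^\infty$-norms of the entries of $N$. This yields a constant $c_2$, depending only on $m$ and those $L^\infty$-norms, such that $\mu_m(t)\le c_2$ on $[a,b]$, and hence $(N(t)V,V)\le c_2|V|^2$.

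For the lower bound, the key step is to trade the determinant of $N$ against the larger eigenvalues. Using the just-obtained uniform bound $\mu_i(t)\le c_2$ for $i=2,\ldots,m$, I write
\[
\det N(t)=\mu_1(t)\prod_{i=2}^m\mu_i(t)\le c_2^{m-1}\mu_1(t),
\]
so that $\mu_1(t)\ge c_2^{-(m-1)}\det N(t)$. Substituting this into $(N(t)V,V)\ge\mu_1(t)|V|^2$ produces the desired inequality with $c_1=c_2^{-(m-1)}$, a constant that again depends only on the $L^\infty$-norms of the entries (and on $m$).

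The main subtlety is conceptual rather than computational: one must resist the temptation of bounding $(N(t)V,V)$ from below merely by $\mu_1(t)|V|^2$ (which would discard all information about $\det N$) and instead convert $\det N$ into a quantitative lower bound on the smallest eigenvalue by dividing through by the trivial upper bounds on the remaining eigenvalues. No regularity of $N$ in $t$ is needed, only pointwise symmetry, pointwise positive semi-definiteness, and the uniform $L^\infty$ control of its entries, so the inequalities hold for each $t\in[a,b]$ with $t$-independent constants.
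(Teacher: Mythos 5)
Your proof is correct, and it is the standard spectral argument for this result. The paper itself does not prove the lemma: it is quoted verbatim from Jannelli and Taglialatela (Lemma~2.1 of \cite{JT}), which the author invokes without reproducing the argument. So there is nothing in the paper to compare against line by line, but your route -- diagonalise $N(t)$ with eigenvalues $0\le\mu_1(t)\le\cdots\le\mu_m(t)$, bound $\mu_m(t)$ by the Frobenius norm and hence by the $L^\infty$-norms of the entries to get $c_2$, and then use $\det N(t)=\prod_i\mu_i(t)\le c_2^{m-1}\mu_1(t)$ to convert the determinant into a lower bound on $\mu_1(t)$ -- is precisely the argument one finds in \cite{JT}. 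Two small remarks. First, your write-up correctly avoids dividing by $\mu_1(t)$: the inequality $\det N(t)\le c_2^{m-1}\mu_1(t)$ is obtained by bounding the factors $\mu_2,\ldots,\mu_m$ above, so it remains valid when $\mu_1(t)=0$, which is essential in the weakly hyperbolic setting where $S$ may degenerate. Second, the formula $c_1=c_2^{-(m-1)}$ presupposes $c_2>0$; this costs nothing, since if the $L^\infty$-norms all vanish then $N\equiv0$ and both inequalities hold trivially, but it is worth stating if one wants the constants to be genuinely well defined (e.g.\ replace $c_2$ by $\max(c_2,1)$). The dependence on the dimension $m$ that you make explicit is implicit in the paper's phrasing, since fixing the matrix size is part of fixing ``the entries of $N(t)$''.
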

Note that the matrix $S$ is positive definite and its entries and eigenvalues are bounded with respect to $t\in[0,T]$, $\xi\in\R^n$ and $\eps\in(0,1]$. In addition, since by construction
\[
\lambda_{j+1,\eps}(t,\xi)-\lambda_{j,\eps}(t,\xi)\ge \omega(\eps)\lara{\xi},
\]
for all $t\in[0,T]$, $\xi\in\R^n$ and $\eps\in(0,1]$, then from \eqref{detS} it follows that 
the bound from below
\[
{\rm det}S(t,\xi)\ge \omega(\eps)^{m^2-m},
\]
holds. By applying Lemma \ref{lemmaJT} to $S$ we can therefore write
\[
\lara{SV,V}\ge c_1{\rm det}S(t,\xi)|V|^2\ge c_1\omega(\eps)^{m^2-m}|V|^2.
\]
More precisely, we can state the following lemma.
\begin{lemma}
\label{lemmaGR}
Let $S(t,\xi)$ be the symmetriser of the strictly hyperbolic matrix $A(t,\xi)\lara{\xi}^{-1}$ defined above. Then, there exist two positive constants $c_1$ and $c_2$ such that
\[
c_1\omega(\eps)^{m^2-m}|V|^2\le (S(t,\xi)V,V)\le c_2|V|^2
\]
holds for all $t\in[0,T]$, $\xi\in\R^n$, $\eps\in(0,1]$ and $V\in\C^m$.
\end{lemma}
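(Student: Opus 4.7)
The plan is to deduce the two-sided bound as a direct application of Lemma \ref{lemmaJT} to $S(t,\xi)$, after verifying its two hypotheses: uniform boundedness of the entries of $S$ and a quantitative lower bound on $\det S$. The upper bound will be essentially free, while the lower bound is where the separation built into the regularisation \eqref{eigen_reg_omega} really pays off.

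For the upper bound I would simply observe that, since $S(t,\xi)$ has entries of order $0$ in $\xi$, its coefficients are uniformly bounded in $(t,\xi,\eps)\in[0,T]\times\R^n\times(0,1]$. The upper half of Lemma \ref{lemmaJT} applied pointwise in $(t,\xi,\eps)$ then yields $(S(t,\xi)V,V)\le c_2|V|^2$ with $c_2$ independent of $t$, $\xi$ and $\eps$.

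The lower bound requires a separation estimate for the regularised roots. Recall from \eqref{eigen_reg_omega} that
\[
\lambda_{j,\eps}(t,\xi)=(\lambda_j(\cdot,\xi)\ast\varphi_{\omega(\eps)})(t)+j\omega(\eps)\lara{\xi}.
\]
Assuming without loss of generality the ordering $\lambda_1\le\lambda_2\le\cdots\le\lambda_m$ of the original roots and the non-negativity of the mollifier $\varphi$ (both standard in this construction), convolution preserves the ordering pointwise in $t$, so for $i>j$ and all $(t,\xi)$
\[
\lambda_{i,\eps}(t,\xi)-\lambda_{j,\eps}(t,\xi)=\bigl((\lambda_i-\lambda_j)\ast\varphi_{\omega(\eps)}\bigr)(t)+(i-j)\omega(\eps)\lara{\xi}\ge (i-j)\omega(\eps)\lara{\xi}.
\]
Inserting this separation into \eqref{detS} yields
\[
\det S(t,\xi)\ge \prod_{1\le j<i\le m}(i-j)^2\omega(\eps)^2 = C_m\,\omega(\eps)^{2\binom{m}{2}} = C_m\,\omega(\eps)^{m^2-m},
\]
with $C_m=\prod_{j<i}(i-j)^2>0$, which is the key quantitative point: the exponent $m^2-m$ arises naturally as $2\binom{m}{2}$ from pairing the roots in the determinant formula. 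The lower half of Lemma \ref{lemmaJT} (applied to the symmetric positive semi-definite matrix $S$ at each $(t,\xi,\eps)$) then gives
\[
(S(t,\xi)V,V)\ge c_1\det S(t,\xi)|V|^2\ge c_1 C_m\,\omega(\eps)^{m^2-m}|V|^2,
\]
which is the desired lower bound after absorbing $C_m$ into the constant.

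The only subtle point, and the step I would be most careful about, is the pointwise preservation of the root ordering after convolution: one needs both $\varphi\ge 0$ and the (non-restrictive) ordering assumption on the $\lambda_j$. A secondary care point is the uniformity of the constants $c_1,c_2$ supplied by Lemma \ref{lemmaJT} — since the $L^\infty$-norms of the entries of $S(t,\xi)$ are controlled uniformly in $(t,\xi,\eps)$, these constants are genuinely independent of $\eps$, which is exactly what is needed to propagate the bound through the energy estimate in the next subsection.
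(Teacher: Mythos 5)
Your proof is correct and follows essentially the same route as the paper: uniform boundedness of the entries of $S$ gives the upper bound via Lemma \ref{lemmaJT}, and the separation $\lambda_{i,\eps}-\lambda_{j,\eps}\ge\omega(\eps)\lara{\xi}$ built into the regularisation \eqref{eigen_reg_omega} plugged into \eqref{detS} gives $\det S\gtrsim\omega(\eps)^{m^2-m}$, from which the lower bound follows by the other half of Lemma \ref{lemmaJT}. The only (minor) difference is that you spell out why the separation holds (ordering preserved under convolution with a non-negative mollifier, then the additive shift $j\omega(\eps)\lara{\xi}$), whereas the paper simply invokes it ``by construction''; your explicit constant $C_m$ is also a harmless refinement.
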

\subsection{Energy estimates and proof of Theorem \ref{main_theo}}
We are now ready to introduce the energy
\[
E(t,\xi)=(S(t,\xi)V(t,\xi),V(t,\xi)).
\]
for the Cauchy problem \eqref{system_new},
\[
\begin{split}
D_t V&=A(t,\xi)V+B(t,\xi)V+\widehat{F}(t,\xi),\\
V|_{t=0}(\xi)&=V_0(\xi).
\end{split}.
\]
By straightforward computations we have
\[
\begin{split}
\partial_t E(t,\xi)&=(\partial_t SV,V)+ (S\partial_tV,V)+(SV,\partial_tV)\\
&=(\partial_t SV,V)+i((SA-A^\ast S)V,V)+i((SB-B^\ast S)V,V)-2\Im(S\widehat{F},V)\\
&=(\partial_t SV,V)+i((SB-B^\ast S)V,V)-2\Im(S\widehat{F},V)\\
&\le (\Vert\partial_t S\Vert+\Vert SB-B^\ast S\Vert +1)\vert V\vert^2+\Vert S\widehat{F}\Vert^2\\
&\le \max(\Vert\partial_t S\Vert+\Vert SB-B^\ast S\Vert +1,\Vert S\Vert^2)(\vert V\vert^2+\vert \widehat{F}\vert^2)\\
&\le c\,\omega(\eps)^{-N}(\vert V\vert^2+\vert \widehat{F}\vert^2)\\
\end{split}
\]
where, the last estimate follows from the regularisation of the equation coefficients, more precisely, from the fact that $\Vert\partial_t S\Vert\le c\,\omega(\eps)^{-1}$ and $\Vert SB-B^\ast S\Vert\le c\,\omega(\eps)^{-N}$ for some $c>0$ and $N\ge 1$, uniformly in $\eps\in(0,1]$.
Thus, an application of Lemma \ref{lemmaGR} yields 
\[
\partial_t E(t,\xi)\le cc_1^{-1}\omega(\eps)^{-N-m^2+m}E(t,\xi)+c\,\omega(\eps)^{-N}\vert \widehat{F}\vert^2.
\]

An application of Gronwall's lemma combined with the estimate from above in Lemma \ref{lemmaGR} gives the estimate
\[
|V|^2\le c_1^{-1}\omega(\eps)^{-m^2+m}E(t,\xi)\le c_1^{-1}\omega(\eps)^{-m^2+m}d_\eps \exp({cc^{-1}}\omega(\eps)^{-N-m^2+m}t),
\]
valid for all $\eps\in(0,1]$ and $t\in[0,T]$ with
\[
\begin{split}
d_\eps&=(E(0,\xi)V_0,V_0)+c\omega(\eps)^{-N}\sup_{0\le t\le T}\vert \widehat{F}(t,\xi)\vert^2T\\
&\le c'|V_0|^2+c\omega(\eps)^{-N}\sup_{0\le t\le T}\vert \widehat{F}(t,\xi)\vert^2T.
\end{split}
\]
Hence, by stating clearly the dependence in $\eps$, we can write
\[
|V_\eps^2|\le c_1^{-1}\omega(\eps)^{-m^2+m}(c'|V_{0,\eps}|^2+c\omega(\eps)^{-N}\sup_{0\le t\le T}\vert F_\eps(t,\xi)\vert^2T)\exp({cc^{-1}}\omega(\eps)^{-N-m^2+m}t).
\]
Since $V_{0,\eps}$ and $\widehat{F}_\eps$ are both Fourier transforms of $\gamma^s$-moderate nets for $s>1$, choosing $\omega(\eps)$ suitably, i.e., $cc^{-1}\omega(\eps)^{-N-m^2+m}=\ln(1/\eps)$ and using the Fourier characterisation of $\gamma^s$-moderate nets in Proposition \ref{prop_reg_gevrey_mod}, we can conclude that $U_\eps=\mathcal{F}^{-1}V_\eps$ is $\gamma^s$-moderate as well. Since the coefficients of the regularised equation we are studying are $C^\infty$-moderate in $t$ one can easily conclude that $U_\eps$ and consequently  the corresponding net $u_\eps$ solving the regularised Cauchy problem \eqref{CP_reg} are both $C^\infty([0,T];\gamma^s(\R^n))$-moderate. We have therefore proved that a very weak  solution of  order $s$ of the Cauchy problem \eqref{intro_CP} exists provided that coefficients and initial data are regularised in an appropriate way.
\begin{remark}
For simplicity we have here regularised the equation coefficients and the initial data using the same net $\omega(\eps)$ which has to be chosen of logarithmic type to ensure the existence of a very weak solution. A more careful analysis of the previous estimates shows that the choice of a logarithmic scale is essential when regularising the roots $\lambda_j$ and the lower order terms while the right hand-side $f$ and the initial data can be regularised setting $\omega(\eps)=\eps$.
\end{remark}

\section{Consistency with the classical results}
In this section we review some classical results concerning the hyperbolic Cauchy problem \eqref{intro_CP} valid when the coefficients of the principal part and therefore the corresponding roots are more regular, for instance of class $C^\alpha$ in $t$ with $\alpha\in(0,1]$. Colombini and Kinoshita in 2002 (see \cite{ColKi:02}) in one space dimension and later Garetto and Ruzhansky in \cite{GR:11} for any space dimension  proved that when the real roots $\lambda_j$ are of H\"older order $\alpha$ with respect to $t$, the lower order terms are continuous and of order $0\le l\le m-1$ and the right-hand side $f$ belongs to $C([0,T];\gamma^s_c(\R^n))$ then for any $g_k(x)\in \gamma^s_c(\Rn)$ ($k=1,\ldots,m$) the Cauchy problem \eqref{intro_eq}-\eqref{intro_ic} has a unique global solution
$u\in C^m([0,T]; \gamma^s(\Rn))$, provided that
\[
1< s<1+\min\left\{\alpha,\frac{m-l}{l}\right\}.
\]
Note that this well-posedness result has been obtained under the additional hypothesis that there exists $c>0$ such that
 \beq
\label{hyp_coincide}
|\lambda_i(t,\xi)-\lambda_j(t,\xi)|\le c|\lambda_k(t,\xi)-\lambda_{k-1}(t,\xi)|
\eeq
for all $1\le i,j,k\le m$, for all $t\in[0,T]$ and $\xi\in\R^n$, and it can be improved to 
\[
1< s<1+\frac{\alpha}{1-\alpha}
\]
when the roots are distinct. If the initial data $g_k$ belong to $\E'(\R^n)$ then the Cauchy problem \eqref{intro_CP} has a unique solution 
$u\in C^m([0,T]; \D'_{(s)}(\R^n))$, provided that
\[
1< s\le1+\min\left\{\alpha,\frac{m-l}{l}\right\},
\]
in the weakly hyperbolic case and that 
\[
1< s\le1+\frac{\alpha}{1-\alpha}
\]
in the strictly hyperbolic case.  

It is our aim now to show that when the roots are of class $C^\alpha$, $\alpha\in(0,1]$ in $t$ and the uniformity property \eqref{hyp_coincide} holds then any very weak solution of order $s$ obtained via Theorem \ref{main_theo} converges to the unique classical solution.  This requires the following preliminary result which compares the Fourier transform of a regularised compactly supported distribution with the Fourier transform of the distribution itself.
\begin{proposition}
\label{prop_FT_v}
Let $v\in\E'(\R^n)$. Then there exists $\nu>0$, $C>0$ and for all $q\in\mathbb{N}$ a constant $c_q>0$ such that 
\[
|\widehat{v\ast\rho_{\omega(\eps)}}(\xi)-\widehat{v}(\xi)|\le c_qC\omega(\eps)^q\esp^{\nu\lara{\xi}^{\frac{1}{s}}},
\]
for all $\xi\in\R^n$ and $\eps\in(0,1]$.
\end{proposition}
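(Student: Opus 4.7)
The natural strategy is to work on the Fourier side, where $\widehat{v\ast\rho_{\omega(\eps)}}(\xi)=\widehat{v}(\xi)\,\widehat{\rho_{\omega(\eps)}}(\xi)$, and therefore
\[
\widehat{v\ast\rho_{\omega(\eps)}}(\xi)-\widehat{v}(\xi)=\widehat{v}(\xi)\bigl(\widehat{\rho_{\omega(\eps)}}(\xi)-1\bigr).
\]
Since $v\in\E'(\R^n)\subset\E'_{(s)}(\R^n)$, the characterisation \eqref{ft_ultra} yields constants $\nu,C>0$ such that $|\widehat{v}(\xi)|\le C\,\esp^{\nu\lara{\xi}^{1/s}}$. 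Hence the whole problem reduces to producing, for each $q\in\N$, a constant $c_q>0$ with
\[
\bigl|\widehat{\rho_{\omega(\eps)}}(\xi)-1\bigr|\le c_q\,\omega(\eps)^q\,\esp^{\mu\lara{\xi}^{1/s}},\qquad \xi\in\R^n,\ \eps\in(0,1],
\]
for some fixed $\mu>0$ which can afterwards be absorbed by enlarging $\nu$.

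I would split the estimate by introducing the untruncated mollifier $\phi_{\omega(\eps)}(x):=\omega(\eps)^{-n}\phi(x/\omega(\eps))$ and writing
\[
\widehat{\rho_{\omega(\eps)}}(\xi)-1=\bigl(\widehat{\rho_{\omega(\eps)}}(\xi)-\widehat{\phi_{\omega(\eps)}}(\xi)\bigr)+\bigl(\widehat{\phi_{\omega(\eps)}}(\xi)-1\bigr).
\]
For the second piece, $\widehat{\phi_{\omega(\eps)}}(\xi)=\widehat{\phi}(\omega(\eps)\xi)=\psi(\omega(\eps)\xi)$, and since $\psi\equiv 1$ in a ball $\{|\eta|\le\delta\}$ this difference vanishes when $|\omega(\eps)\xi|\le\delta$. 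On the complementary region $|\omega(\eps)\xi|>\delta$, the trivial bound $1\le \delta^{-q}|\omega(\eps)\xi|^q$ combined with the uniform boundedness of $\psi$ gives $|\widehat{\phi_{\omega(\eps)}}(\xi)-1|\le c_q\,\omega(\eps)^q|\xi|^q$. The polynomial factor $|\xi|^q$ is then harmlessly absorbed into $\esp^{\mu\lara{\xi}^{1/s}}$ (for any $\mu>0$) at the cost of enlarging the constant $c_q$.

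For the first piece I would work directly in $x$-space:
\[
\bigl|\widehat{\rho_{\omega(\eps)}}(\xi)-\widehat{\phi_{\omega(\eps)}}(\xi)\bigr|\le\int_{\R^n}|\phi_{\omega(\eps)}(x)|\,\bigl|1-\chi(x|\log\omega(\eps)|)\bigr|\,dx\le\int_{|x|\ge 1/|\log\omega(\eps)|}|\phi_{\omega(\eps)}(x)|\,dx,
\]
using the support properties of $\chi$. Substituting $y=x/\omega(\eps)$ turns the domain into $|y|\ge R_\eps$ with $R_\eps=1/(\omega(\eps)|\log\omega(\eps)|)\to\infty$. Since $\phi\in\mathcal{S}^{(s)}(\R^n)$ enjoys Gelfand-Shilov decay $|\phi(y)|\le C\,\esp^{-c|y|^{1/s}}$, the tail integral is bounded by $C'\esp^{-c' R_\eps^{1/s}}$. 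The logarithmic safety factor in the definition of $\rho_{\omega(\eps)}$ was tailored precisely so that $R_\eps^{1/s}\omega(\eps)^{-1/s}|\log\omega(\eps)|^{-1/s}$ dominates any multiple of $|\log\omega(\eps)|$; this yields $\esp^{-c' R_\eps^{1/s}}\le c_q\omega(\eps)^q$ for all $q\in\N$ and all sufficiently small $\eps$, with an adjustment of constants for $\eps$ away from $0$.

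Verifying this last inequality is, I expect, the main obstacle: one must check carefully that $\omega(\eps)^{1/s}|\log\omega(\eps)|^{1+1/s}\to 0$ suffices to compare $c' R_\eps^{1/s}$ with $q|\log\omega(\eps)|$ uniformly on $(0,1]$. Once both split estimates are in hand, multiplying by the bound on $|\widehat{v}(\xi)|$ and collecting the exponentials into a single $\esp^{\nu\lara{\xi}^{1/s}}$ (with a possibly larger $\nu$) gives the conclusion.
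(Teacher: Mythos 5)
Your decomposition $\widehat{\rho_{\omega(\eps)}}-1=(\widehat{\rho_{\omega(\eps)}}-\widehat{\phi_{\omega(\eps)}})+(\widehat{\phi_{\omega(\eps)}}-1)$ is exactly the one used in the paper, and your treatment of the second piece is essentially the paper's: the paper Taylor-expands $\widehat{\phi}(\omega(\eps)\xi)-\widehat{\phi}(0)$ using the vanishing moments of $\phi$, whereas you use directly that $\widehat{\phi}=\psi\equiv1$ near the origin; these give the same bound $c_q\omega(\eps)^q\lara{\xi}^q$, with your version a shade more direct given the construction of $\phi$. Where you genuinely diverge from the paper is in the first piece. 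The paper stays on the Fourier side: it rewrites $\widehat{\rho_{\omega(\eps)}}-\widehat{\phi_{\omega(\eps)}}$ as $\int\esp^{-i\omega(\eps)y\xi}\phi(y)(\chi(y\omega(\eps)|\log\omega(\eps)|)-1)\,dy$, Taylor-expands $\chi-1$ to order $q$ around $0$ (all lower derivatives vanish since $\chi\equiv1$ near $0$), and uses the elementary inequality $\omega(\eps)|\log\omega(\eps)|\le c\,\omega(\eps)^{1/2}$ together with $\int|\phi(y)y^\alpha|\,dy<\infty$; this gives $c_q\omega(\eps)^{q/2}$, which the paper compensates for by replacing $q$ with $2q$ at the end. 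Your argument instead bounds the $L^1$ mass of $\phi$ on the tail $\{|y|\ge R_\eps\}$ with $R_\eps\sim(\omega(\eps)|\log\omega(\eps)|)^{-1}$, invoking the Gelfand--Shilov decay of $\phi\in\mathcal{S}^{(s)}$, and then checks that $R_\eps^{1/s}$ outgrows $|\log\omega(\eps)|$. Both are valid: the paper's Taylor route is more elementary in that it uses only boundedness of $\partial^\alpha\chi$ and finiteness of polynomial moments of $\phi$, while your tail-estimate route uses the finer sub-exponential decay of $\phi$ but delivers the full power $\omega(\eps)^q$ without the factor-of-two bookkeeping (and indeed gives decay in $\omega(\eps)$ faster than any polynomial, so the precise rate of the comparison $R_\eps^{1/s}$ versus $|\log\omega(\eps)|$ you flag is not actually a delicate point — it holds with room to spare).
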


\begin{proof}
We recall that $\rho_{\omega(\eps)}$ has been defined in \eqref{def_rho}. By direct computations we have that
\[
|\widehat{v\ast\rho_{\omega(\eps)}}(\xi)-\widehat{v}(\xi)=\widehat{v}(\xi)(\widehat{\rho_{\omega(\eps)}}(\xi)-\widehat{\phi_{\omega(\eps)}}(\xi)+\widehat{\phi_{\omega(\eps)}}(\xi)-1),
\]
where $\phi\in\mathcal{S}^{(s)}(\R^n)$ with
\[
\int\phi(x)\, dx=1,\qquad \text{and}\qquad \int x^\alpha\phi(x)\, dx=0,\quad \text{for all $\alpha\neq 0$.}
\]
By the vanishing moments property of the mollifier $\phi$ it follows immediately that for all $q\in\mathbb{N}$ there exists $c_q>0$ such that
\beq
\label{est_1}
|\widehat{\phi_{\omega(\eps)}}(\xi)-1|=|\widehat{\phi}(\omega(\eps)\xi)-\widehat{\phi}(0)|\le c_q\omega(\eps)^q\lara{\xi}^q,
\eeq
holds for all $\xi\in\R^n$. We now write $\widehat{\rho_{\omega(\eps)}}(\xi)-\widehat{\phi_{\omega(\eps)}}(\xi)$ as
\[
\int_{\R^n}\esp^{-i\omega(\eps)y\xi}\phi(y)(\chi(y\omega(\eps)|\log\omega(\eps)|)-1)\, dy.
\]
Note that 
\[
|\omega(\eps)|\log\omega(\eps)|\le c\omega(\eps)^{\frac{1}{2}}.
\]
Since $\chi$ is compactly supported with $\chi(0)=1$, by Taylor's formula we have that
\begin{multline}
\label{est_2}
\biggl|\int_{\R^n}\esp^{-i\omega(\eps)y\xi}\phi(y)(\chi(y\omega(\eps)|\log\omega(\eps)|)-1)\, dy\biggr|\\
\le \sum_{|\alpha|=q}c_\alpha\int_{\R^n}|\phi(y)\partial^\alpha\chi(\theta y)y^\alpha|(\omega(\eps)|\log\omega(\eps)|)^q\, dy
\le c_q\omega(\eps)^{\frac{q}{2}}.
\end{multline}
Since $\E'(\R^n)\subseteq \E'_{(s)}(\R^n)$ and $|\widehat{v}(\xi)|\le c\esp^{\nu\lara{\xi}^{\frac{1}{s}}}$, by combining \eqref{est_1} with \eqref{est_2}  we conclude that
\[
|\widehat{v\ast\rho_{\omega(\eps)}}(\xi)-\widehat{v}(\xi)|\le c_qC\omega(\eps)^{\frac{q}{2}}\esp^{\nu\lara{\xi}^{\frac{1}{s}}}.
\]
This estimates proves the proposition by taking $2q$ rather than $q$.

\end{proof} 

We can now state and prove our consistency theorem.

\begin{theorem}
\label{theo_cons}
Let the Cauchy problem 
\beq
\label{CP_cons}
\begin{split}
D^m_t u-\sum_{j=1}^m\sum_{|\nu|= j} a_{\nu,j}(t)D_t^{m-j}D^\nu_x u-\sum_{j=1}^m\sum_{|\nu|<j}b_{\nu,j}(t)D^{m-j}_tD^\nu_x u &=f(t,x),\, t\in[0,T], x\in\R^n,\\
D_t^{k}u(0,x)&=g_k,\quad k=0,\cdots,m-1,
\end{split}
\eeq
have real roots $\lambda_j$ of H\"older order $\alpha$ in $t$, continuous lower order terms of order $0\le l\le m-1$, right-hand side $f\in C([0,T];\gamma^s_c(\R^n))$ and initial data $g_k(x)\in \gamma^s_c(\Rn)$ ($k=1,\ldots,m$) with
\[
1<s<1+\min\left\{\alpha,\frac{m-l}{l}\right\}.
\]
Assume in addition that the equation coefficients are compactly supported and that the uniform property \eqref{hyp_coincide} holds. 

Hence, any very weak solution $(u_\eps)_\eps$ of order $s$ converges in  $C([0,T], \gamma^s(\R^n))$  to the unique classical solution $u\in C^m([0,T], \gamma^s(\R^n))$ of the Cauchy problem \eqref{CP_cons}.

If the initial data $g_k$ belong to $\E'(\R^n)$ then any very weak solution $(u_\eps)_\eps$ of order $s$ with
\[
1<s\le1+\min\left\{\alpha,\frac{m-l}{l}\right\}.
\] 
converges in  $C([0,T], \D'_{(s)}(\R^n))$  to the unique classical solution $u\in C^m([0,T], \D'_{(s)}(\R^n))$ of the Cauchy problem \eqref{CP_cons}.
\end{theorem}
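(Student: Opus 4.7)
The plan is to apply the energy machinery of Section 3 to the difference between a very weak solution and the classical one. Under the hypotheses, the classical solution $u$ of \eqref{CP_cons} exists uniquely in $C^m([0,T],\gamma^s(\R^n))$ (resp.\ in $C^m([0,T],\D'_{(s)}(\R^n))$) by the results of \cite{ColKi:02, GR:11}. Set $w_\eps=u_\eps-u$. Then $w_\eps$ solves the regularised Cauchy problem with the operator $P_\eps$ of \eqref{CP_reg} on the left, with initial data $D_t^k w_\eps(0,x)=g_{k,\eps}-g_k$ and right-hand side
\[
F_\eps=(f_\eps-f)+\sum_{j,|\nu|=j}(a_{\nu,j,\eps}-a_{\nu,j})D_t^{m-j}D^\nu_x u+\sum_{j,|\nu|<j}(b_{\nu,j,\eps}-b_{\nu,j})D_t^{m-j}D^\nu_x u.
\]
Reducing to a first-order system and running the symmetriser/Gronwall argument of Section 3 (which depends only on $P_\eps$, not on the data) yields, at the Fourier-transform level, the same bound
\[
|W_\eps(t,\xi)|^2\le c_1^{-1}\omega(\eps)^{-m^2+m}\bigl(c'|W_{0,\eps}(\xi)|^2+c\,\omega(\eps)^{-N}T\sup_{0\le t\le T}|\widehat{F_\eps}(t,\xi)|^2\bigr)\exp\bigl(cc_1^{-1}\omega(\eps)^{-N-m^2+m}t\bigr),
\]
where now $W_\eps$, $W_{0,\eps}$, $\widehat{F_\eps}$ are built from the differences above.

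The next step is to show that $|W_{0,\eps}(\xi)|$ and $|\widehat{F_\eps}(t,\xi)|$ are small in a Gevrey-Fourier sense. Proposition \ref{prop_FT_v} gives, for every $q\in\N$,
\[
|\widehat{g_{k,\eps}}(\xi)-\widehat{g_k}(\xi)|\le c_q\omega(\eps)^q\esp^{\nu\lara{\xi}^{1/s}},\qquad|\widehat{f_\eps}(t,\xi)-\widehat{f}(t,\xi)|\le c_q\omega(\eps)^q\esp^{\nu\lara{\xi}^{1/s}},
\]
uniformly in $t$. For the coefficient-difference contributions to $\widehat{F_\eps}$, I would use the H\"older hypothesis to obtain
$\|a_{\nu,j,\eps}-a_{\nu,j}\|_{L^\infty}\le C\omega(\eps)^\alpha$ (and an analogous estimate for the $b_{\nu,j,\eps}-b_{\nu,j}$, by standard mollifier arguments for compactly supported distributions of finite order combined with their H\"older/continuous regularity), and couple this with the Gevrey-Fourier decay $|\widehat{D_t^{m-j}D^\nu_x u}(t,\xi)|\le C\esp^{-\delta\lara{\xi}^{1/s}}$ coming from $u\in C^m([0,T],\gamma^s_c(\R^n))$ via Proposition \ref{prop_Fourier}. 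In the ultradistributional case, $u\in C^m([0,T],\D'_{(s)}(\R^n))$ and the decay is replaced by the ultradistributional growth \eqref{ft_ultra}, which is still compatible with the ensuing Fourier estimates.

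Finally, the very weak solution $u_\eps$ was constructed with a logarithmic regularising scale, $cc_1^{-1}\omega(\eps)^{-N-m^2+m}=\ln(1/\eps)$, so that $\exp(cc_1^{-1}\omega(\eps)^{-N-m^2+m}t)=\eps^{-ct}$. Inserting the above Fourier estimates, the bound for $|W_\eps(t,\xi)|$ takes the form
\[
|W_\eps(t,\xi)|\le C\eps^{-ct}\omega(\eps)^{\alpha-M}\esp^{-\delta'\lara{\xi}^{1/s}}+C\eps^{-ct}c_q\omega(\eps)^{q-M}\esp^{\nu\lara{\xi}^{1/s}},
\]
for some fixed $M$ depending on $m,N$. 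Choosing $q$ arbitrarily large and using $\omega(\eps)^\alpha=(\ln(1/\eps))^{-\alpha/(N+m^2-m)}$, both terms on the right tend to zero faster than any power of $\lara{\xi}^{1/s}$ via the Fourier characterisation in Proposition \ref{prop_Fourier}; this establishes $u_\eps\to u$ in $C([0,T],\gamma^s(\R^n))$ (resp.\ in $C([0,T],\D'_{(s)}(\R^n))$).

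The main obstacle, and the place where the hypothesis $s<1+\min\{\alpha,(m-l)/l\}$ is essential, is reconciling the genuinely logarithmic decay of the coefficient differences with the polynomial-in-$1/\eps$ growth produced by Gronwall. This is exactly the reason for the additional uniformity condition \eqref{hyp_coincide}: under it, a sharper variant of the energy estimate (as in Theorem 1 of \cite{GR:11}) forces the growth exponent to be compatible with the H\"older rate through the Gevrey threshold $1+\min\{\alpha,(m-l)/l\}$, after which the argument above closes. In the strictly hyperbolic case the threshold is $1+\alpha/(1-\alpha)$ and exactly the same scheme applies with the improved eigenvalue separation $|\lambda_{j+1,\eps}-\lambda_{j,\eps}|$ already present in the construction \eqref{eigen_reg_omega}.
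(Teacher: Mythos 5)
Your plan of writing $w_\eps=u_\eps-u$, reducing to a first--order system, applying an energy estimate, and then feeding in the smallness of the data differences $n_\eps$ is the right skeleton, and it matches the paper's initial set-up. But the specific energy estimate you invoke does not close the argument, and your own final paragraph essentially admits this without repairing it.

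Reusing the Section 3 symmetriser/Gronwall bound gives a growth factor $\exp(c\,\omega(\eps)^{-N-m^2+m}t)=\eps^{-ct}$ once $\omega(\eps)$ is chosen on the logarithmic scale. The coefficient and data differences decay only at a logarithmic rate in $\eps$: from $\omega(\eps)\sim(\ln(1/\eps))^{-1/(N+m^2-m)}$ one gets $\omega(\eps)^\alpha\sim(\ln(1/\eps))^{-\alpha/(N+m^2-m)}$ and similarly $\omega(\eps)^q$ for any fixed $q$. No power of $\omega(\eps)$, however large, can offset a factor $\eps^{-ct}$; the products $\eps^{-ct}\omega(\eps)^{\alpha-M}$ and $\eps^{-ct}\omega(\eps)^{q-M}$ in your displayed bound in fact diverge as $\eps\to0$, so your assertion that ``both terms tend to zero'' is false. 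The uniformity condition \eqref{hyp_coincide} does not rescue this computation: it enters at a different place. You gesture at ``a sharper variant of the energy estimate as in Theorem 1 of \cite{GR:11}'', but that is not a refinement of the Section 3 estimate you used — it is a structurally different argument, and you would need to actually run it.

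What the paper does instead, and what you would need, is to abandon the symmetriser $S$ entirely for the consistency part. After forming the difference system, the paper re-regularises the roots with an independent parameter $\delta$, quasi-diagonalises via the Vandermonde matrix $H_\eps$, and works with the substitution $V_\eps=\esp^{-\mu(t)\lara{\xi}^{1/s}}(\det H_\eps)^{-1}H_\eps W_\eps$. After estimating $\partial_t\det H_\eps/\det H_\eps$, $H_\eps^{-1}\partial_t H_\eps$, $H_\eps^{-1}A_\eps H_\eps-(H_\eps^{-1}A_\eps H_\eps)^\ast$ and $H_\eps^{-1}B_\eps H_\eps-(H_\eps^{-1}B_\eps H_\eps)^\ast$ in terms of $\delta$ and $\lara{\xi}$, one sets $\delta=\lara{\xi}^{-\gamma}$ with $\gamma=\min\{1/(1+\alpha),(m-l)/(\alpha m)\}$, so that the Gronwall growth factor is \emph{polynomial in $\lara{\xi}$ and uniform in $\eps$}. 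This is precisely where $s<1+\min\{\alpha,(m-l)/l\}$ and \eqref{hyp_coincide} are used: they guarantee $-\gamma\alpha+1<1/s$ so that the $\lara{\xi}^{-\gamma\alpha+1}$ term can be absorbed into $-2\kappa\lara{\xi}^{1/s}$. The $\eps$-dependence then sits only in the data $n_\eps$ and $g_{k,\eps}-g_k$ (controlled by Proposition \ref{prop_FT_v} and the uniform convergence of the coefficient nets), and the bound on $|V_\eps|$ is a product of a factor $\omega'(\eps)\to 0$ with a factor that is uniform in $\eps$ and Gevrey (resp.\ ultradistributionally) admissible in $\xi$. Your approach cannot produce such an $\eps$-uniform factor, so the gap is not cosmetic.
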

It is clear from the previous statement that the limit $u$ does not depend on the $C^\infty$-moderate regularisation of the equation roots and coefficients and on the $\gamma^s$-moderate regularisation of the initial data.
\begin{proof}

\emph{The case of Gevrey initial data}

We begin by observing that the right-hand side $f$ needs to be regularised only with respect to $t$ since it is already Gevrey in $x$ and that, for the same reason, the initial data $g_k$, $k=0,\dots,m-1$ do not need to be regularised. Let $(u_\eps)_\eps$ be a very weak solution of the Cauchy problem \eqref{CP_cons} that by Theorem \ref{main_theo} we know to exist. This means that there exists  $u_\eps\in C^m([0,T];\gamma^s(\R^n))$ which solves the regularised Cauchy problem  
\beq
\label{CP_c1}
\begin{split}
D^m_t u_\eps-\sum_{j=1}^m\sum_{|\nu|= j} a_{\nu,j,\eps}(t)D_t^{m-j}D^\nu_x u_\eps-\sum_{j=1}^m\sum_{|\nu|<j}b_{\nu,j,\eps}(t)D^{m-j}_tD^\nu_x u_\eps &=f_\eps(t,x),\\
D_t^{k}u_\eps(0,x)&=g_k, 
\end{split}
\eeq
where $t\in[0,T]$, $x\in\R^n$,  $k=0,\cdots,m-1$, the coefficients $a_{\nu,j,\eps}$ and $b_{\nu,j,\eps}$ are $C^\infty$-moderate regularisations of the coefficients $a_{\nu,j}$ and $b_{\nu,j}$, respectively and $f_\eps(t,x)=f(\cdot,x)\ast\varphi_{\omega(\eps)}$, for some suitable mollifier $\varphi_{\omega(\eps)}$ as in Section 2. Under the hypothesis of Theorem \ref{theo_cons} we know that there exists a $u\in C^m([0,T];\gamma^s(\R^n))$ such that
\beq
\begin{split}
\label{CP_c2}
 D^m_t u-\sum_{j=1}^m\sum_{|\nu|= j} a_{\nu,j}(t)D_t^{m-j}D^\nu_x u-\sum_{j=1}^m\sum_{|\nu|<j}b_{\nu,j}(t)D^{m-j}_tD^\nu_x u&=f(t,x),\\
D_t^{k}u(0,x)&=g_k, 
\end{split}
\eeq
for all $t\in[0,T]$, $x\in\R^n$,  $k=0,\cdots,m-1$. By comparing \eqref{CP_c1} with \eqref{CP_c2} we get that
\beq
\begin{split}
\label{CP_c3}
D^m_t\tilde{u}_\eps-\sum_{j=1}^m\sum_{|\nu|= j} a_{\nu,j,\eps}(t)D_t^{m-j}D^\nu_x\tilde{u}_\eps -\sum_{j=1}^m\sum_{|\nu|<j}b_{\nu,j,\eps}(t)D^{m-j}_tD^\nu_x \tilde{u}_\eps&=n_\eps(t,x),\\
D_t^{k}\tilde{u}_\eps(0,x)&=0,  
\end{split}
\eeq
where $\tilde{u}_\eps=u_\eps-u$ and
\begin{multline}
\label{def_n}
n_\eps(t,x)=f_\eps(t,x)-f(t,x) +
\sum_{j=1}^m\sum_{|\nu|= j} (a_{\nu,j,\eps}-a_{\nu,j})(t)D_t^{m-j}D^\nu_xu\\+\sum_{j=1}^m\sum_{|\nu|<j}(b_{\nu,j,\eps}-b_{\nu,j})(t)D^{m-j}_tD^\nu_x u.
\end{multline}
Note that since $\omega(\eps)\to 0$ as $\eps\to 0$ then $f_\eps\to f$ in $C([0,T];\gamma^s(\R^n))$ and, by regularisation of the coefficients $a_{\nu,j}$ and $b_{\nu,j}$, the nets $a_{\nu,j,\eps}-a_{\nu,j}$ and $b_{\nu,j,\eps}-b_{\nu,j}$ tend to $0$ uniformly on $[0,T]$. Hence, since $f\in C([0,T];\gamma^s_c(\R^n))$ and $u\in\gamma^s_c(\R^n)$ we can conclude that $n_\eps(t,x)$ is compactly supported with respect to $x$ and tends to $0$ in $C([0,T], \gamma^s(\R^n))$.

We now work on the Cauchy problem \eqref{CP_c3} as in the proof of Theorem 3 in \cite{GR:11}. In other words this means to reduce the equation in \eqref{CP_c3} to the first order system 
\[
\begin{split}
D_t V_\eps&=A_\eps(t,\xi)V_\eps+B_\eps(t,\xi)V_\eps+\widehat{F}_\eps(t,\xi),\\
V_\eps|_{t=0}(\xi)&=0,
\end{split}.
\]
to regularise once more the separated roots $\lambda_{j,\eps}$ as follows
\[
\wt{\lambda}_{j,\eps}(t)=(\lambda_{j,\eps}\ast\psi_\delta)(t)+j\delta^\alpha,\quad \delta\in(0,1],
\]
where $\psi\in\Cinfc(\R^n)$ with $\int\psi=1$ and to look for a solution of the type 
\beq
\label{VW}
V_\eps(t,\xi)=\esp^{-\mu(t)\lara{\xi}^{\frac{1}{s}}}(\det H_\eps)^{-1}H_\eps W_\eps,
\eeq
where $\mu\in C^1[0,T]$ will be determined in the sequel and
\[
H_\eps=\left(
    \begin{array}{ccccc}
      1 & 1 & 1 & \dots & 1\\
      \wt{\lambda}_{1,\eps}\lara{\xi}^{-1} &  \wt{\lambda}_{2,\eps}\lara{\xi}^{-1} &  \wt{\lambda}_{3,\eps}\lara{\xi}^{-1} & \dots &  \wt{\lambda}_{m,\eps}\lara{\xi}^{-1} \\
       \wt{\lambda}^2_{1,\eps}\lara{\xi}^{-2} &  \wt{\lambda}^2_{2,\eps}\lara{\xi}^{-2} &  \wt{\lambda}^2_{3,\eps}\lara{\xi}^{-2} & \dots & \wt{\lambda}^2_{m,\eps}\lara{\xi}^{-2} \\
      \dots & \dots & \dots & \dots & \dots\\
       \wt{\lambda}^{m-1}_{1,\eps}\lara{\xi}^{-m+1} & \wt{\lambda}^{m-1}_{2,\eps}\lara{\xi}^{-m+1} &  \wt{\lambda}^{m-1}_{3,\eps}\lara{\xi}^{-m+1} & \dots &  \wt{\lambda}^{m-1}_{m,\eps}\lara{\xi}^{-m+1}\\
    \end{array}
  \right).
\]
Note that, for the sake of the reader, the dependence in $\eps$ is expressed throughout the proof whereas the dependence on $\delta$ is hidden since at the certain point of the proof $\delta$ will be set to be a suitable power of $\lara{\xi}$. Note also that the extra regularisation of the roots $\lambda_j$ and the separation factor $j\delta^\alpha$ given above allow us to obtain uniform estimates in $\eps$ when deriving with respect to $t$ and estimating from below. More precisely, arguing as in Proposition 18 in \cite{GR:11} we have that
\[
|\partial_t\wt{\lambda}_{j,\eps}(t,\xi)|\le c\delta^{\alpha-1}\lara{\xi},
\]
\[
|\wt{\lambda}_{j,\eps}(t,\xi)-\wt{\lambda}_{i,\eps}(t,\xi)\ge \delta^\alpha\lara{\xi},\quad j>i, 
\]
and
\[
|\wt{\lambda}_{j,\eps}(t,\xi)-\lambda_{j,\eps}(t,\xi)|\le c\delta^\alpha\lara{\xi},
\]
uniformly with respect to $t\in[0,T]$, $\xi\in\R^n$, $\eps$ and $\delta$ in $(0,1]$.

Now, by following the proof of Theorem 3 in \cite{GR:11} we arrive at the energy estimate
\begin{multline*}
\partial_t |W_\eps(t,\xi)|^2=2{\rm Re} (\partial_t W_\eps(t,\xi),W_\eps(t,\xi))\\
=2\mu'(t)\lara{\xi}^{\frac{1}{s}}|W_\eps(t,\xi)|^2+2\frac{\partial_t\det H_\eps}{\det H_\eps}|W_\eps(t,\xi)|^2-2
{\rm Re}(H_\eps^{-1}\partial_t H_\eps W_\eps,W_\eps)\\
-2{\rm Im} (H_\eps^{-1}A_\eps H_\eps W_\eps,W_\eps)-2{\rm Im} (H_\eps^{-1}B_\eps H_\eps W_\eps,W_\eps) \\
-2{\rm Im} (\esp^{\mu(t)\lara{\xi}^{\frac{1}{s}}}(\det H_\eps) H_\eps^{-1}\widehat{F}_\eps,W_\eps)
\end{multline*}
and to the estimates
\begin{enumerate}
\item $|\frac{\partial_t\det H_\eps}{\det H_\eps}|\le c_1\delta^{-1}$,
\item $\Vert H_\eps^{-1}\partial_t H_\eps\Vert\le c_2\delta^{-1}$,
\item $\Vert H_\eps^{-1}A_\eps H_\eps-(H_\eps^{-1}A_\eps H_\eps)^\ast\Vert\le c_3\delta^\alpha\lara{\xi}$,
\item $\Vert H_\eps^{-1}B_\eps H_\eps-(H_\eps^{-1}B_\eps H_\eps)^\ast\Vert\le c_4\delta^{\alpha(1-m)}\lara{\xi}^{l-m+1}$.
\end{enumerate}
Note that by the uniform convergence in $t$ to $0$ of the nets $f_\eps-f$, $a_{\nu,j,\eps}-a_{\nu,j}$ and $b_{\nu,j,\eps}-b_{\nu,j}$ passing at the Fourier transform level we get that 
\[
|\widehat{F}_\eps(t,\xi)|\le c'\omega'(\eps)\esp^{-\nu\lara{\xi}^{\frac{1}{s}}},
\]
for some net $\omega'(\eps)$ tending to $0$ as $\eps\to 0$, for some constants $c',\nu>0$ and for all $\eps\in(0,1]$, $t\in[0,T]$ and $\xi\in\R^n$. Hence, making use of the four estimates above and arguing as in the proof of Theorem 3 in \cite{GR:11} we arrive at
\begin{multline*}
\partial_t |W_\eps(t,\xi)|^2   
\le (2\mu'(t)\lara{\xi}^{\frac{1}{s}}+C_1\delta^{-1}+C_2\delta^{\alpha}\lara{\xi}+C_3\delta^{\alpha(1-m)}\lara{\xi}^{l-m+1})
|W_\eps(t,\xi)|^2+\\
+C'\omega'(\eps)\esp^{(\mu(t)-\nu)\lara{\xi}^{\frac{1}{s}}}|W_\eps(t,\xi)|.
\end{multline*}
Set now $\delta=\lara{\xi}^{-\gamma}$ with $\gamma=\min\{\frac{1}{1+\alpha},\frac{m-l}{\alpha m}\}$ and $\mu(t)=(\mu(0)-\kappa t)$, with $\mu(0)$ and $\kappa>0$ to be determined later on. Assuming $|W_\eps(t,\xi)|\ge 1$ and recalling that by construction $-\gamma\alpha+1<\frac{1}{s}$, taking $\eps$ small enough  we have that
\begin{multline}
\label{energy1}
\partial_t |W_\eps(t,\xi)|^2\le \big(-2\kappa \lara{\xi}^{\frac{1}{s}}+C\lara{\xi}^{-\gamma\alpha+1}+
C'\omega'(\eps)\esp^{(\mu(0)-\nu)\lara{\xi}^{\frac{1}{s}}}\big)|W_\eps(t,\xi)|^2 \\
\le \big(-2\kappa \lara{\xi}^{\frac{1}{s}}+C\lara{\xi}^{-\gamma\alpha+1}+
C'\esp^{(\mu(0)-\nu)\lara{\xi}^{\frac{1}{s}}}\big)|W_\eps(t,\xi)|^2.\\
\end{multline}
At this point, setting $\mu(0)<\nu$ for $|\xi|$ large enough we conclude that $\partial_t |W_\eps(t,\xi)|^2\le 0$, i.e., 
\[
|W_\eps(t,\xi)|\le |W_{0,\eps}(t,\xi)|,
\]
for all $t\in(0,T]$ for $\eps$ small and $|\xi|$ large. Since, $V_{0,\eps}=0$ and the term $$\esp^{-\mu(t)\lara{\xi}^{\frac{1}{s}}}(\det H_\eps)^{-1}H_\eps$$ is invertible it turns out that $W_{0,\eps}(t,\xi)$ is identically $0$ as well and therefore $W_{\eps}(t,\xi)=0$ which is in contradiction with the assumption $|W_\eps(t,\xi)|\ge 1$. Hence, $|W_\eps(t,\xi)|$ must be $\le 1$ and 
\[
\partial_t |W_\eps(t,\xi)|^2\le \big(-2\kappa \lara{\xi}^{\frac{1}{s}}+C\lara{\xi}^{-\gamma\alpha+1})|W_\eps(t,\xi)|^2+
C'\omega'(\eps)\esp^{(\mu(0)-\nu)\lara{\xi}^{\frac{1}{s}}}.
\]
Assume $\mu(0)<\nu$ as above. For $|\xi|$ large (this is not restrictive) and for $-\gamma\alpha+1<\frac{1}{s}$ we get  
\[
|W_\eps(t,\xi)|^2\le \biggl(|W_\eps(0,\xi)|^2+C'T\omega'(\eps)\esp^{(\mu(0)-\nu)\lara{\xi}^{\frac{1}{s}}}\biggr)
= C'T\omega'(\eps)\esp^{(\mu(0)-\nu)\lara{\xi}^{\frac{1}{s}}}
\]
Since $\omega'(\eps)\to 0$ as $\eps>0$ it follows that $W_\eps(t,\xi)\to 0$ uniformly in $C([0,T];\gamma^s(\R^n))$. Passing now to $V_\eps$ defined in \eqref{VW} we obtain
\begin{multline*}
|V_\eps(t,\xi)|^2\le \biggl(\esp^{-(\mu(0)-\kappa t)\lara{\xi}^{\frac{1}{s}}}(|\det H_\eps)^{-1}H_\eps| \biggr)^2|W_\eps(t,\xi)|^2\\
\le \biggl(\esp^{-(\mu(0)-\kappa t)\lara{\xi}^{\frac{1}{s}}}(|\det H_\eps)^{-1}H_\eps| \biggr)^2\biggl(C'T\omega'(\eps)\esp^{(\mu(0)-\nu)\lara{\xi}^{\frac{1}{s}}}\biggr)\\
\le C'T\omega'(\eps)|(\det H_\eps)^{-1}H_\eps|^2\esp^{{(-\mu(0)-\nu+2\kappa T)}\lara{\xi}^{\frac{1}{s}}}.
\end{multline*}
Note that, by definition of $H_\eps$, the estimate
\beq
\label{form_H}
|(\det H_\eps)^{-1}H_\eps|\le c\lara{\xi}^{\gamma\alpha\frac{(m-1)m}{2}}
\eeq
holds uniformly in all the variables. Hence,
\[
|V_\eps(t,\xi)|^2\le C^{''}\omega'(\eps)\lara{\xi}^{\gamma\alpha(m-1)m}\esp^{{(-\mu(0)-\nu+2\kappa T)}\lara{\xi}^{\frac{1}{s}}}.
\]
Taking $2\kappa T<\mu(0)+\nu$ in the previous estimate we conclude by Fourier characterisation that $\wt{u}=u_\eps-u$ corresponding to $V_\eps$ converges to $0$ in $C([0,T];\gamma^s(\R^n))$ as $\eps\to 0$.

\emph{The case of distributional initial data}

If the initial data $g_k$ are compactly supported distributions for $k=1,\dots,m-1$, then the regularised nets $g_{k,\eps}=g_k\ast\rho_{\omega(\eps)}$, as defined in Subsection 2.2, are $\gamma^s$-moderate and $g_{k,\eps}\to g_k$ in $\E'(\R^n)$. We have therefore the regularised Cauchy problem
\beq
\label{CP_c4}
\begin{split}
D^m_t u_\eps-\sum_{j=1}^m\sum_{|\nu|= j} a_{\nu,j,\eps}(t)D_t^{m-j}D^\nu_x u_\eps-\sum_{j=1}^m\sum_{|\nu|<j}b_{\nu,j,\eps}(t)D^{m-j}_tD^\nu_x u_\eps &=f_\eps(t,x),\\
D_t^{k}u_\eps(0,x)&=g_{k,\eps}, 
\end{split}
\eeq
In addition, by the well-posedness results in \cite{GR:11} we know that the Cauchy problem \eqref{CP_c2} is well-posed in $C^m([0,T], \D'_{(s)}(\R^n))$. Hence, by comparing \eqref{CP_c4} with \eqref{CP_c2} we obtain
\[
\begin{split}
D^m_t\tilde{u}_\eps-\sum_{j=1}^m\sum_{|\nu|= j} a_{\nu,j,\eps}(t)D_t^{m-j}D^\nu_x\tilde{u}_\eps -\sum_{j=1}^m\sum_{|\nu|<j}b_{\nu,j,\eps}(t)D^{m-j}_tD^\nu_x \tilde{u}_\eps&=n_\eps(t,x),\\
D_t^{k}\tilde{u}_\eps(0,x)&=g_{k,\eps}-g_k,  
\end{split}
\]
with $\tilde{u}_\eps=u_\eps-u$ and $n_\eps$ as in \eqref{def_n}. Since the classical solution $u$ is a compactly supported ultradistribution with respect to $x$ we have that $n_\eps(t,x)$ tends to $0$ in $C([0,T], \D'_{(s)}(\R^n))$. Passing now to the corresponding first order system 
\[
\begin{split}
D_t V_\eps&=A_\eps(t,\xi)V_\eps+B_\eps(t,\xi)V_\eps+\widehat{F}_\eps(t,\xi),\\
V_\eps|_{t=0}(\xi)&=V_{0,\eps},
\end{split}.
\]
by the Fourier characterisation of nets of ultradistributions  and Proposition \ref{prop_FT_v} we easily see that 
\[
|\widehat{F}_\eps(t,\xi)|\le \omega'(\eps)\esp^{\nu\lara{\xi}^{\frac{1}{s}}},
\]
and
\[
|V_{0,\eps}(\xi)|\le \omega'(\eps)\esp^{\nu\lara{\xi}^{\frac{1}{s}}}
\]
for some net $\omega'(\eps)$ tending to $0$ as $\eps\to 0$, for some constant $\nu>0$ and for all $\eps\in(0,1]$, $t\in[0,T]$ and $\xi\in\R^n$.  Assume now that $|W_\eps(t,\xi)|\ge 1$. By arguing as in \eqref{energy1} with $-\gamma\alpha+1\le \frac{1}{s}$ by choosing $\kappa>0$ large enough we arrive at
\begin{multline*}
\partial_t |W_\eps(t,\xi)|^2\le \big(-2\kappa \lara{\xi}^{\frac{1}{s}}+C\lara{\xi}^{-\gamma\alpha+1}+
C'\omega'(\eps)\esp^{(\mu(t)+\nu)\lara{\xi}^{\frac{1}{s}}}\big)|W_\eps(t,\xi)|^2 \\
\le \big(-2\kappa \lara{\xi}^{\frac{1}{s}}+C\lara{\xi}^{-\gamma\alpha+1}+
C'\esp^{(\mu(0)-\kappa t+\nu)\lara{\xi}^{\frac{1}{s}}}\big)|W_\eps(t,\xi)|^2\le 0.\\
\end{multline*}
At the level of $V_\eps$ this means that 
\begin{multline*}
\label{last_estimate}
|V_\eps(t,\xi)|
=\esp^{-\mu(t)\lara{\xi}^{\frac{1}{s}}}\frac{1}{\det H_\eps(t,\xi)}|H_\eps(t,\xi)||W_\eps(t,\xi)|\le \\
\esp^{-\mu(t)\lara{\xi}^{\frac{1}{s}}}\frac{1}{\det H_\eps(t,\xi)}|H_\eps(t,\xi)||W_\eps(0,\xi)|=\\
\esp^{(-\mu(t)+\mu(0))\lara{\xi}^{\frac{1}{s}}}\frac{\det H_\eps(0,\xi)}{\det H_\eps(t,\xi)}|H_\eps(t,\xi)||H_\eps^{-1}(0,\xi)||V_\eps(0,\xi)|,
\end{multline*}
where, for $\gamma$ and $\delta$ as in above, we have
\beq
\label{form_VW}
\frac{\det H_\eps(0,\xi)}{\det H_\eps(t,\xi)}|H_\eps(t,\xi)||H_\eps^{-1}(0,\xi)|\le c\,\delta^{-\alpha\frac{(m-1)m}{2}}=c\lara{\xi}^{\gamma\alpha\frac{(m-1)m}{2}}.
\eeq
Hence,
\begin{multline}
\label{last_estimate1}
|V_\eps(t,\xi)|\le c\,\esp^{(-\mu(t)+\mu(0))\lara{\xi}^{\frac{1}{s}}}\lara{\xi}^{\gamma\alpha\frac{(m-1)m}{2}}|V_\eps(0,\xi)|\\
\le c_1\omega'(\eps)\,\esp^{(-\mu(t)+\mu(0))\lara{\xi}^{\frac{1}{s}}}\lara{\xi}^{\gamma\alpha\frac{(m-1)m}{2}}\esp^{\nu\lara{\xi}^{\frac{1}{s}}},
\end{multline}
for $|W_\eps(t,\xi)|\ge 1$.

When $|W_\eps(t,\xi)|\le 1$ by Gronwall's inequality we obtain the estimate
 \[
|W_\eps(t,\xi)|^2\le |W_\eps(0,\xi)|^2+C'T\omega'(\eps)\esp^{(\mu(0)+\nu)\lara{\xi}^{\frac{1}{s}}}.
 \]
 This implies
\begin{multline*}
|V_\eps(t,\xi)|^2
=\esp^{-2\mu(t)\lara{\xi}^{\frac{1}{s}}}\frac{1}{\det^2 H_\eps(t,\xi)}|H_\eps(t,\xi)|^2|W_\eps(t,\xi)|^2\le \\
\esp^{-2\mu(t)\lara{\xi}^{\frac{1}{s}}}\frac{1}{\det^2 H_\eps(t,\xi)}|H_\eps(t,\xi)|^2\biggl(|W_\eps(0,\xi)|^2+C'T\omega'(\eps)\esp^{(\mu(0)+\nu)\lara{\xi}^{\frac{1}{s}}}\biggr)=\\
\esp^{(-2\mu(t)+2\mu(0))\lara{\xi}^{\frac{1}{s}}}\frac{\det^2 H_\eps(0,\xi)}{\det^2 H_\eps(t,\xi)}|H_\eps(t,\xi)|^2|H_\eps^{-1}(0,\xi)|^2|V_\eps(0,\xi)|^2\\
+\esp^{-2\mu(t)\lara{\xi}^{\frac{1}{s}}}\frac{1}{\det^2 H_\eps(t,\xi)}|H_\eps(t,\xi)|^2C'T\omega'(\eps)\esp^{(\mu(0)+\nu)\lara{\xi}^{\frac{1}{s}}}.
\end{multline*}
By the properties of $V_{0,\eps}$ and the estimates \eqref{form_VW} and \eqref{form_H} we deduce that
\begin{multline}
\label{last_estimate2}
|V_\eps(t,\xi)|^2\le c_2\omega'(\eps)^2\esp^{(2\kappa t+2\nu)\lara{\xi}^{\frac{1}{s}}}\lara{\xi}^{\gamma\alpha(m-1)m}\\
+c_2\omega'(\eps)\esp^{(-\mu(0)+2\kappa T+\nu)\lara{\xi}^{\frac{1}{s}}}\lara{\xi}^{\gamma\alpha(m-1)m},
\end{multline}
for $|W_\eps(t,\xi)|\le 1$. Concluding, by combining \eqref{last_estimate1} with \eqref{last_estimate2} we have that $V_\eps(t,\xi)$ tends to $0$ as a net of ultradistributions uniformly with respect to $t$ and therefore $u_\eps$ tends to $u$ in $C([0,T],\D'_{(s)}(\R^n))$.
\end{proof}
 
\begin{remark}
The previous proof clearly shows that Theorem \ref{theo_cons} can be stated for first order $m\times m$ hyperbolic systems in Sylvester (or more in general block Sylvester) form. In other words, the existence of very weak solutions for systems of this type is consistent with the classical Gevrey or ultradistributional results.
\end{remark} 

\section{Very weak solvability of first order hyperbolic systems}

We end this paper by considering the following Cauchy problem
\beq
\label{CP_syst_fin}
\begin{split}
D_t u-A(t,D_x)u-B(t)u&=F(t,x),\quad x\in\R^n,\, t\in[0,T],\\
u|_{t=0}&=g_0,
\end{split}
\eeq
where $A$ and $B$ are $m\times m$ matrices of first order and zero order differential operators, respectively, with $t$-dependent coefficients, $F$, $u$ and $u_0$ are column vectors with $m$ entries. We work under the following set of hypotheses: 

\begin{itemize}
\item[(h1)] the system coefficients are compactly supported in $t$ and the eigenvalues of the matrix $A$ are real and bounded with respect to $t\in[0,T]$,
\item[(h2)] the entries of the matrix $B$ belong to $\E'([0,T])$, 
\item[(h3)] $F\in( \E'([0,T])\otimes\E'(\R))^m$ and $g_0\in \E'(\R^n)^m$.
\end{itemize}

We want to investigate the very weak solvability of the Cauchy problem \eqref{CP_syst_fin}, in the sense that we want to understand if, given $s>1$, there exist

\begin{itemize}
\item[(i)] $C^\infty$-moderate regularisations $A_\eps$ and  $B_\eps$ of the matrices $A$ and $B$, respectively,
\item[(ii)] a $C^\infty([0,T];\gamma^s(\R^n))$-moderate regularisation $F_\eps$ of the right-hand side $F$, and
\item[(iii)] a $\gamma^s$-moderate regularisations $g_{0, \eps}$ of the initial data $g_0$, 
\end{itemize}
such that $(u_\eps)_\eps$ solves the regularised problem
\[
\begin{split}
D_t u_\eps-A_\eps(t,D_x)u_\eps-B_\eps(t)u_\eps&=F_\eps(t,x),\\
u_\eps|_{t=0}&=g_{0,\eps},
\end{split}
\]
for $t\in[0,T], x\in\R^n$ and $\eps\in(0,1]$, and is $C^\infty([0,T];\gamma^s(\R^n))$-moderate. Note that since we are dealing with matrices here the moderateness is meant in every entry. 

As observed already in the introduction and in the previous sections, the proof of Theorem \ref{main_theo} for $m$ order scalar equations is equivalent to the proof of the very weak solvability of a first order system of pseudodifferential equations with principal part in Sylvester form. This result of very weak solvability for systems can be easily adapted to a slightly more general principal part, i.e., to a principal part in block Sylvester form. From this fact it follows that the proof method of Theorem \ref{main_theo} directly implies the very weak solvability of the Cauchy problem \eqref{CP_syst_fin} if the hyperbolic system in \eqref{CP_syst_fin} can be transformed into a hyperbolic system with principal part in block Sylvester form. In the sequel we show that not only this Sylvester transformation is possible but also that the eigenvalues are preserved. This is due to the reduction to block Sylvester form given by d'Ancona and Spagnolo in \cite{DS} which, for the sake of the reader, we recall in the following subsection. 

\subsection{Reduction to block Sylvester form}

We begin by considering the cofactor matrix $L(t,\tau,\xi)$ of $\tau I-A(t,\xi)$ where $I$ is the $m\times m$ identity matrix. By applying the corresponding operator $L(t,D_t,D_x)$ to \eqref{CP_syst_fin} we transform the system
\[
D_t u-A(t,D_x)u-B(t)u=F(t,x)
\]
into
\beq
\label{red_1}
\delta(t,D_t,D_x)Iu-C(t,D_x,D_t)u=G(t,x),
\eeq
where $\delta(t,\tau,\xi)={\rm det}(\tau I-A(t,\xi))$, $C(t,D_x,D_t)$ is the matrix of  lower order terms (differential operators of order $m-1$) and $G(t,x)=L(t,D_t,D_x)F(t,x)$. Note that $\delta(t,D_t,D_x)$ is the operator
\[
D_t^m+\sum_{h=0}^{m-1}b_{m-h}(t,D_x)D_t^h,
\]
with $b_{m-h}(t,\xi)$ homogeneous polynomial of order $m-h$.

We now transform this set of scalar equations of order $m$ into a first order system of size $m^2\times m^2$ of pseudodifferential equations, by setting
\[
U=\{D_j^{j-1}\lara{D_x}^{m-j}u\}_{j=1,2,\dots,m},
\]
where $\lara{D_x}$ is the pseudodifferential operator with symbol $\lara{\xi}$. More precisely, we can write \eqref{red_1} in the form
\[
D_tU-\mathcal{A}(t,D_x)U+\mathcal{L}(t,D_x)U=\mathcal{R}(t,x),
\]
where $\mathcal{A}$ is a $m^2\times m^2$ matrix made of $m$ identical blocks of the type
\begin{multline*}
\lara{D_x}\cdot\\
\left(\begin{array}{cccccc}
                              0 & 1 & 0 & \cdots & 0 \\
                               0 & 0 & 1 &  \cdots & 0\\
                               \vdots & \vdots & \vdots & \vdots & \vdots\\
                              -b_m(t,D_x)\lara{D_x}^m& -b_{m-1}(t,D_x)\lara{D_x}^{-m+1} & \cdots  & \cdots & -b_1(t,D_x)\lara{D_x}^{-1}\\
                              \end{array}
                           \right),
                           \end{multline*}
the matrix $\mathcal{L}$  of the lower order terms is made of $m$ blocks of  size $m\times m^2$ of the type
\[
\left(\begin{array}{cccccc}
                              0 & 0 & 0 & \cdots & 0 & 0 \\
                               0 & 0 & 0 &  \cdots &  0 & 0\\
                               \vdots & \vdots & \vdots & \vdots & \vdots\\
                              l_{j,1}(t, D_x)& l_{j,2}(t,D_x) & \cdots  & \cdots & l_{j,m^2-1}(t,D_x) & l_{j,m^2}(t,D_x)
                                                     \end{array}
                           \right), 
\]
with $j=1,\dots,m$, and finally the right-hand side $\mathcal{R}$ is a $m^2\times 1$ matrix with $m$ column blocks of size $m\times 1$ of the type
\[
\left(\begin{array}{c}
                              0 \\
                               0 \\
                               \vdots \\
                              r_j(t,x),
                                                     \end{array}
                           \right)
\]
$j=1,\dots,m$.
By construction the matrices $\mathcal{A}$ and $\mathcal{B}$ are made by pseudodifferential operators of order $1$ and $0$, respectively. 

Concluding, the Cauchy problem \eqref{CP_syst_fin} has been transformed into
\beq
\label{CP_syst_Syl}
\begin{split}
D_tU-\mathcal{A}(t,D_x)U+\mathcal{L}(t,D_x)U&=\mathcal{R}(t,x),\\
U_{t=0} &= \{D_j^{j-1}\lara{D_x}^{m-j}g_0\}_{j=1,2,\dots,m}.
\end{split}
\eeq
This is a Cauchy problem of first order pseudodifferential equations with principal part in block Sylvester form. The size of the system is increased from $m\times m$ to $m^2\times m^2$ but the system is still hyperbolic, since the eigenvalues of any block of $\mathcal{A}(t,\xi)$ are the eigenvalues of the matrix $\lara{\xi}^{-1}A(t,\xi)$.

\subsection{Proof of Theorem \ref{main_theo_syst} }
The first step in the proof of Theorem \ref{main_theo_syst} is the reduction of \eqref{CP_syst_fin} into \eqref{CP_syst_Syl}. Since we work under the set of hypotheses (h1), (h2) and (h3) we can regularise the system eigenvalues and coefficients and the initial data as in the scalar equation case. Making use of the block Sylvester form of the matrix $\mathcal{A}$ and of the corresponding regularised version we can construct a symmetriser which is block diagonal (made of $m$ blocks of size $m\times m$) and repeat the arguments of the proof of Theorem \ref{main_theo}. This easily leads to the existence of a very weak solution for any $s>1$.

\end{document}